\newtheorem{theorem}{Theorem}[section]
\newtheorem{lemma}[theorem]{Lemma}
\newtheorem{proposition}[theorem]{Proposition}
\theoremstyle{definition}
\newtheorem{definition}[theorem]{Definition}
\newtheorem{example}[theorem]{Example}
\newtheorem{remark}[theorem]{Remark}
\newcommand\N{\mathbb{N}}
\newcommand\R{\mathbb R}
\newcommand\B{\mathcal{B}}
 \newcommand{\ep}[0]{\varepsilon}
 \newcommand{\al}[0]{\alpha}
 \newcommand{\be}[0]{\beta}
 \newcommand{\de}[0]{\delta}
 \newcommand{\la}[0]{\lambda}
 \newcommand{\si}[0]{\sigma}
 \newcommand{\Ph}[0]{\Phi}
 \newcommand{\Ps}[0]{\Psi}
\newcommand{\No}{\mathcal{N}}
\newcommand{\Po}{\mathcal{P}}
\newcommand{\El}{\mathcal{E}}
\newcommand{\X}{\underline{X}}
\newcommand{\M}{\mathcal{M}}
\newcommand\W{\mathcal{W}}\newcommand\F{\mathcal{F}}\newcommand\m{\mathfrak{m}}
\begin{document}
\title[Representation and Approximation of Positivity Preservers]
{Representation and Approximation of Positivity Preservers}
\author{Tim Netzer}
\address{Universit\"at Konstanz, Fachbereich Mathematik und Statistik, 78457 Konstanz, Germany}
\email{tim.netzer@gmx.de} \keywords{Positive and non-negative
polynomials,  linear preservers, moment problems, integral
representations, approximation of operators}
\subjclass[2000]{12E05, 15A04, 47B38, 44A60, 31B10, 41A36}
\date{\today}
\begin{abstract}
We consider a closed set $S\subseteq\R^n$ and a linear operator
$$\Ph\colon\R[X_1,\ldots,X_n]\rightarrow \R[X_1,\ldots,X_n]$$ that
preserves nonnegative polynomials, in the following sense: if
$f\geq 0$ on $S$, then $\Ph(f)\geq 0$ on $S$ as well. We show that
each such operator is given by integration with respect to a
 measure taking nonnegative functions as its values. This can be seen as a generalization of
Haviland's Theorem, which concerns linear \textit{functionals} on
$\R[X_1,\ldots,X_n]$. For compact sets $S$ we use the result to
show that any nonnegativity preserving operator is a pointwise
limit of very simple nonnegativity preservers with finite
dimensional range.
\end{abstract}

\maketitle

\section{Introduction}
Linear operators that preserve \textit{hyperbolic} polynomials
have already been studied a hundred years ago by P\'{o}lya and
Schur \cite{ps}. A univariate real polynomial $p$ is called
hyperbolic, if all of its roots are real, and a linear map
$\Ph\colon\R[t]\rightarrow\R[t]$ is called a \textit{hyperbolicity
preserver}, if for any hyperbolic $p\in\R[t]$, the image $\Ph(p)$
is again hyperbolic. For example, simple differentiation $p\mapsto
\frac{\partial}{\partial t}p$ is a hyperbolicity preserver, which
follows from Rolle's Theorem.

In \cite{gs1}, the authors study ellipticity-, positivity- and
nonnegativity-preserving operators on polynomial algebras.
Although there is a huge amount of literature on positive
operators in Banach lattices (see for example \cite{ab}), results
specific to the polynomial case seem to be rare.

Let $\R[\underline{X}]=\R[X_1,\ldots,X_n]$ be the real polynomial
algebra in $n$ variables. Let
$$\No(\R^n):=\left\{p\in\R[\underline{X}]\mid p(x)\geq 0 \mbox{ for
all } x\in\R^n\right\}$$ denote the set of \textit{globally
nonnegative polynomials}, let
$$\Po(\R^n):=\left\{p\in\R[\X]\mid p(x)>0 \mbox{ for all }
x\in\R^n\right\}$$ be the set of \textit{globally positive
polynomials}, and let $$\El(\R^n)=\left\{p\in\R[\X]\mid p(x)\neq 0
\mbox{ for all } x\in\R^n\right\}$$ be the set of polynomials
without real zeros, also called \textit{elliptic polynomials}.

A linear map $\Ph\colon\R[\X]\rightarrow\R[\X]$ is called
\textit{nonnegativity}-, \textit{positivity}- or
\textit{ellipticity-preserving}, if
$\Ph\left(\No(\R^n)\right)\subseteq \No(\R^n)$,
$\Ph\left(\Po(\R^n)\right)\subseteq \Po(\R^n)$ or
$\Ph\left(\El(\R^n)\right)\subseteq \El(\R^n)$ holds,
respectively. One wants to characterize and describe these kinds
of operators as good as possible. It turns out that this question
is closely related to both real algebra  as well as to functional
analysis.

Fundamental work towards a characterization of all nonnegativity-,
positivity- and ellipticity-preservers is done in \cite{gs1,gs2},
and \cite{b} then contains a full characterization of such
operators, in terms of  differential operator representations. So
the problem can be considered as completed, where of course other
kinds of characterizations would still be of interest.

In this work we consider operators that preserve polynomials which
are positive, nonnegative or elliptic on a certain \textit{subset}
$S$ of $\R^n$. All the results in the spirit of \cite{b,gs1,gs2}
that one might expect turn out to be false in the general case.

We therefore choose a different approach to the problem. Our first
main result is an integral representation of general nonnegativity
preservers. Haviland's Theorem says that every linear functional
on $\R[\X]$ that maps $S$-nonnegative polynomials to nonnegative
reals is always given by integration on $S$. A similar statement
is true for $S$-nonnegativity preserving linear operators on
$\R[\X]$. The occuring measures can of course not be real-valued
in general; they take certain nonnegative functions as their
values instead. Our main integral representation result is Theorem
\ref{weak} below. It applies to any closed set $S$ and any
$S$-nonnegativity preserver.

In the case of a compact set $S$, the integral representation can
be strengthened (Theorem \ref{strong}). Compared to the standard
integral representation results for operators in $C(S)$ (as in
\cite{ds}), it is different in the sense that it does not assume
compactness or weak compactness of the operator; it uses its
nonnegativity instead. The occuring measures can be used to check
whether an $S$-nonnegativity preserver is of finite dimensional
range (Theorem \ref{fira}), compact (Theorem \ref{co}) or weakly
compact (Theorem \ref{weco}). The last two results are standard
from the representation theory of operators on $C(S)$, where the
first one is specific to nonnegative operators on $\R[\X]$.

In the last section we propose another possible solution to the
classification problem of nonnegativity preservers. The idea is to
first provide a class of standard operators that preserve
nonnegativity, and then check which operators can be approximated
by these operators in a suitable sense. In the case of a compact
set $S$, Theorem \ref{approx} below is such an approximation
result.

\section{Preliminaries and known results} Let $S\subseteq\R^n$ be
a set. Similar as before, let \begin{align*}
\No(S)&=\left\{p\in\R[\X]\mid p(x)\geq 0 \mbox{ for all } x\in
S\right\} \\ \Po(S)&=\left\{p\in\R[\X]\mid p(x) >0 \mbox{ for all
} x\in S\right\} \\ \El(S)&=\left\{p\in\R[\X]\mid p(x)\neq 0
\mbox{ for all } x\in S\right\}
\end{align*} denote the set of polynomials that
are nonnegative, positive and elliptic on $S$, respectively. We
clearly have $\Po(S)\subseteq\No(S)$ and $\pm\Po(S)\subseteq
\El(S)$. If $S$ is connected, then $\El(S)=\Po(S)\cup -\Po(S)$.

A linear map (also called operator)
$\Ph\colon\R[\X]\rightarrow\R[\X]$ is called an
\textit{$S$-nonnegativity preserver}, if
$\Ph\left(\No(S)\right)\subseteq\No(S);$ it is called an
\textit{$S$-positivity preserver} if
$\Ph\left(\Po(S)\right)\subseteq\Po(S)$, and an
\textit{$S$-ellipticity preserver} if
$\Ph\left(\El(S)\right)\subseteq\El(S)$ holds.

We say that $S$ is \textit{Zariski dense}, if it is not contained
in the zero set of a polynomial from $\R[\X]\setminus\{0\}$. In
that case, any two different polynomials define different
functions on $S$. We will assume the Zariski denseness of $S$ most
of the time.

The following result is a generalized combination of Lemma 2.1,
Theorem 2.3  and Theorem 2.5 from \cite{gs1}. We include a short
alternative proof.

\begin{proposition}\label{pre} (i) Each $S$-positivity preserver is an
$S$-nonnegativity preserver.

(ii) Let $\Ph\colon\R[\X]\rightarrow\R[\X]$ be an
$S$-nonnegativity preserver. Assume that $\Ph(1)(x)= 0$ for some
$x\in S$. Then $\Ph(p)(x)= 0$  for all $p\in\R[\X]$. In
particular, if $S$ is Zariski dense, then $\Ph(1)=0$ implies
$\Ph=0$.

(iii) If either $S$ is compact, or $n=1$ and $S\subseteq\R$ is
closed, then for any $S$-nonnegativity preserver $\Ph$ and any
$p\in\Po(S)$, the zero locus of $\Ph(p)$ in $S$ equals the zero
locus of $\Ph(1)$ in $S$. In particular, $\Ph$ is an
$S$-positivity preserver if and only if $\Ph(1)>0$ on $S$ in that
case.

 (iv) If $S$ is connected, then $\Ph$ is an $S$-ellipticity
preserver if and only if $\Ph$ or $-\Ph$ is an $S$-positivity
preserver.
\end{proposition}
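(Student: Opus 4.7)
The plan is to handle (i) and (ii) directly, use (ii) as the engine for (iii), and deduce (iv) via a connectedness/homotopy argument. For (i), I would simply perturb: if $f\in\No(S)$ and $\ep>0$, then $f+\ep\in\Po(S)$, so by hypothesis $\Ph(f)+\ep\,\Ph(1)\in\Po(S)$; evaluating at $x\in S$ and letting $\ep\to 0^+$ yields $\Ph(f)(x)\geq 0$. For (ii), the key identity is a Cauchy--Schwarz-type expansion: for every $p\in\R[\X]$ and $t\in\R$, the polynomial $(p+t)^2$ lies in $\No(S)$, so
\[
\Ph(p^2)(x)+2t\,\Ph(p)(x)+t^2\,\Ph(1)(x)\geq 0\qquad\text{for all } t\in\R.
\]
Under $\Ph(1)(x)=0$ the left-hand side becomes affine in $t$ and nonnegative for every $t\in\R$, which forces $\Ph(p)(x)=0$. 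The Zariski-dense consequence is then immediate: if $\Ph(1)=0$ then for every $p$ the polynomial $\Ph(p)$ vanishes on the Zariski-dense set $S$, hence is zero.

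For (iii), part (ii) already supplies the direction $\Ph(1)(x)=0\Rightarrow\Ph(p)(x)=0$. For the converse, I would produce a uniform positive lower bound $p\geq c>0$ on $S$ for every $p\in\Po(S)$: then $p-c\in\No(S)$ gives $\Ph(p)\geq c\,\Ph(1)$ on $S$, and combined with $\Ph(1)\in\No(S)$ this forces $\Ph(p)(x)=0\Rightarrow\Ph(1)(x)=0$. For compact $S$ the bound is immediate by continuity. For $n=1$ and $S\subseteq\R$ closed, on the compact window $S\cap[-R,R]$ continuity supplies a positive minimum, while outside the window $p$ must tend to $+\infty$ along each unbounded end of $S$ (otherwise the asymptotic behavior of a univariate polynomial would drive it negative at the end). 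The final statement about $S$-positivity preservers then follows immediately.

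For (iv), the ``if'' direction is immediate from $\El(S)=\Po(S)\cup(-\Po(S))$. For ``only if'', I would evaluate at $1\in\Po(S)\subseteq\El(S)$: since $\Ph(1)\in\El(S)$ and $S$ is connected, after possibly replacing $\Ph$ by $-\Ph$ I may assume $\Ph(1)\in\Po(S)$. For any $p\in\Po(S)$, the convex combination $p_t:=tp+(1-t)$ stays in $\Po(S)\subseteq\El(S)$ for $t\in[0,1]$, so for each fixed $x\in S$ the affine function $t\mapsto\Ph(p_t)(x)=t\,\Ph(p)(x)+(1-t)\,\Ph(1)(x)$ is nonvanishing on $[0,1]$ and positive at $t=0$, hence positive at $t=1$. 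The main obstacle I anticipate is the unbounded case in (iii): extracting a positive lower bound on an unbounded closed $S\subseteq\R$ is the one place where the $n=1$ hypothesis is genuinely used, and the corresponding claim fails in several variables where positive polynomials on unbounded sets can have zero infimum.
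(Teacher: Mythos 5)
Your proposal is correct and follows essentially the same route as the paper: the $\ep$-perturbation for (i), the quadratic $(p+t)^2$ expansion for (ii), the uniform lower bound $p\geq\ep$ on $S$ (via compactness, or via the asymptotics of univariate polynomials on closed unbounded $S\subseteq\R$) combined with (ii) for (iii), and the convex homotopy $tp+(1-t)$ for (iv).
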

\begin{proof}(i) is proven exactly as in \cite{gs1}, Theorem 2.1.
Namely, if $\Ph$ is $S$-positivity preserving and $p\geq 0$ on
$S$, then for all $\ep>0$, $$\Ph(p)+\ep \Ph(1)= \Ph(p+\ep)>0
\mbox{ on } S,$$ so $\Ph(p)\geq 0$ on $S$.

For (ii) let $p\in\R[\X]$ be arbitrary and note that
$$0\leq \Ph\left( (p+\la)^2 \right)(x) = \Ph(p^2)(x) +2\la \Ph(p)(x)$$ holds for any $\la\in\R$. So clearly
$\Ph(p)(x)=0$. Now if $\Ph(1)=0$, then $\Ph(p)(x)=0$ for all $p$
and all $x\in S$. The Zariski denseness of $S$ then implies
$\Ph=0$.

(iii) is again proven similar to \cite{gs1}, Theorem 2.5: If $p>0$
on $S$, then $p\geq \ep$ on $S$ for some suitable $\ep>0$ (since
$S$ is compact or $S\subseteq\R$ closed). So
$$\Ph(p)-\ep\Ph(1)=\Ph(p-\ep)\geq 0 \mbox{ on } S,$$ for any
$S$-nonnegativity preserver $\Ph$. So the zero locus of $\Ph(p)$
in $S$ is contained in the zero locus of $\Ph(1)$ in $S$. Equality
follows from (ii).

 (iv) Let $S$ be connected and
let $\Ph$ be $S$-ellipticity preserving. $\Ph(1)$ has no zeros in
$S$, so suppose $\Ph(1)>0$ on $S$ (otherwise replace $\Ph$ by
$-\Ph$). Now let $p\in\R[\X]$ be strictly positive on $S$. Then
for any $\la\in[0,1]$, the polynomial $\la p+(1-\la)$ does not
have zeros in $S$, so
$$\Ph(\la p + (1-\la))=\la \Ph(p)+ (1-\la) \Ph(1)$$ does not have
zeros in $S$, for any $\la\in[0,1]$. As $\Ph(1)>0$ on $S$, this is
clearly only possible if also $\Ph(p)>0$ on $S$. So $\Ph$ is
$S$-positivity preserving. The other direction follows immediately
from $\El(S)=\Po(S)\cup -\Po(S)$.
\end{proof}

So in view of Proposition \ref{pre}, we can restrict ourself to
examining $S$-nonnegativity preservers, at least in the case
$S\subseteq\R$ closed or $S$ compact.

Before we can describe the main results from \cite{b,gs1,gs2}, we
introduce certain classes of linear operators on $\R[\X]$.
Therefore let always $\N=\{0,1,2,\ldots\}.$ For any
$\al=(\al_1,\ldots,\al_n),\be=(\be_1,\ldots,\be_n)\in\N^n$, let
$\al!:=\al_1!\cdots\al_n!$, $|\al|:=\al_1+\cdots +\al_n$, write
$\al\preceq\be$ if $\al_i\leq\be_i$ for all $i$, and define
$\be-\al:=(\be_1-\al_1,\ldots,\be_n-\al_n)$. We also use the
notation $\X^{\al}$ for $X_1^{\al_1}\cdots X_n^{\al_n}.$

Here is a list of certain kinds of linear operators on $\R[\X]$:
\begin{example}\label{exam}
\begin{itemize}\label{list}\item[(1)] For $\al\in\N^n$, let $D^{\al}$ denote
the corresponding  differential operator, i.e. the linear operator
that sends $\X^{\be}$ to
$\frac{\be!}{(\be-\al)!}\cdot\X^{\be-\al}$ for $\al\preceq\be$,
and to $0$ otherwise.\item[(2)] Let $f\in\R[\X]$ be a fixed
polynomial, then the multiplication with $f$ is a linear operator,
denoted by $M_f$. Clearly, $M_f$ is $S$-nonnegativity preserving
if and only if $f\in\No(S)$, and $S$-positivity preserving if and
only if $f\in\Po(S)$. \item[(3)] Let $f_1,\ldots,f_n\in\R[\X]$ be
fixed polynomials, and consider the operator
$E_{\underline{f}}\colon p\mapsto p(f_1,\ldots,f_n)$. Such an
operator is even multiplicative, i.e. it is an $\R$-algebra
endomorphism of $\R[\X]$. Each algebra endomorphism is of that
form. To see whether $E_{\underline{f}}$ is $S$-nonnegativity or
$S$-positivity preserving, consider the corresponding polynomial
map
$$\underline{f}\colon\R^n\rightarrow\R^n; x\mapsto
(f_1(x),\ldots,f_n(x)).$$ One checks that $E_{\underline{f}}$ is
$S$-nonnegativity preserving if and only if
$\underline{f}\left(S\right)\subseteq \overline{S}$, and
$E_{\underline{f}}$ is $S$-positivity preserving if and only if
$\underline{f}\left(S\right)\subseteq S$. \item[(4)] A special
class of operators is the following: for polynomials
$f_1,\ldots,f_r$ and points $x_1,\ldots,x_r\in\R^n$ consider
$$\Ph_{\underline{f},\underline{x}}\colon p\mapsto f_1\cdot
p(x_1)+\cdots+f_r\cdot p(x_r).$$ This operator has a finite
dimensional range; in case that the $x_i$ are pairwise disjoint,
its range equals the subspace of $\R[\X]$ spanned by
$f_1,\ldots,f_r$. If all $f_i$ are nonnegative on $S$ and all
$x_i\in S$, the operator is clearly $S$-nonnegativity preserving.
These operators will be used in the approximation result below.
\item[(5)] A generalization of (4) is the following. Let
$L_1,\ldots,L_r$ be linear functionals on $\R[\X]$ and
$f_1,\ldots,f_r\in\R[\X]$. Then
$$\Ph_{\underline{f},\underline{L}}\colon p\mapsto \sum_{i=1}^r f_i\cdot L_i(p)$$ is a linear operator on
$\R[\X]$. It also has a finite dimensional range; in case that all
the $L_i$ are linearly independent, it is the subspace spanned by
the $f_i$. Each linear operator with finite dimensional range is
of this form (see for example in the proof of Theorem \ref{fira}).
If all $f_i\geq 0$ on $S$ and all $L_i$ map $S$-nonnegative
polynomials to $[0,\infty)$, this operator is $S$-nonnegativity
preserving; however, it should be noted that this is not necessary
(see open problem (4) below). \item[(6)] Any two linear operators
$\Ph,\Ps$ on $\R[\X]$ can be summed up and composed, i.e. one can
consider $\Ph+\Ps\colon p\mapsto \Ph(p)+\Ps(p)$ and
$\Ph\circ\Ps\colon p\mapsto \Ph\left(\Ps(p)\right).$ The sum and
composition of two $S$-nonnegativity or $S$-positivity preservers
is again an $S$-nonnegativity or $S$-positivity preserver,
respectively.
 \item[(7)] Let $\left(\Ph_i\right)_{i\in
I}$ be a familiy of linear operators on $\R[\X]$, such that for
any polynomial $p\in\R[\X]$ only finitely many of the values
$\Ph_i(p)$ are not zero. Then $$\sum_{i\in I}\Ph_i\colon p\mapsto
\sum_{i\in I} \Ph_i(p)$$ is a well defined linear operator. For
example, with a (multi-)sequence
$\left(q_{\al}\right)_{\al\in\N^n}$ of polynomials we can define
the operator $$\sum_{\al\in\N^n}q_\al D^{\al}\colon p\mapsto
\sum_{\al}q_{\al}\cdot D^{\al}(p).$$ Such an operator is called a
\textit{differential operator with polynomial coefficients}. If
only finitely many of the polynomials $q_{\al}$ are non-zero, then
the operator is called \textit{of finite order}; otherwise it is
called \textit{of infinite order}. If the $q_{\al}$ are all real
numbers, then the operator is called a \textit{differential
operator with constant coefficients}.
\end{itemize}\end{example}

The following fact is folklore, we include a short proof for the
sake of completeness:

\begin{lemma}Every linear operator on $\R[\X]$ is a
differential operator with polynomial coefficients. The
corresponding multisequence $\left(q_{\al}\right)_{\al\in\N^n}$ of
polynomial coefficients is unique.
\end{lemma}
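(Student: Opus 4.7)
The plan is to build the coefficients $q_\al$ one at a time, ordered by $|\al|$, by matching the required identity $\Ph(\X^\be)=\sum_\al q_\al D^\al(\X^\be)$ monomial by monomial. The key observation is that $D^\al(\X^\be)=\frac{\be!}{(\be-\al)!}\X^{\be-\al}$ when $\al\preceq\be$ and is zero otherwise, so in the desired identity at $\be$ the coefficient $q_\be$ enters with the nonzero scalar factor $\be!$ (from $D^\be(\X^\be)=\be!$) while all other contributing $q_\al$ have $\al\prec\be$ and thus $|\al|<|\be|$. This makes the system triangular.

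Concretely, set $q_0:=\Ph(1)$, and for each $\be$ with $|\be|\geq 1$ define, inductively,
\[
q_\be\;:=\;\frac{1}{\be!}\left(\Ph(\X^\be)-\sum_{\al\prec\be}q_\al\,\frac{\be!}{(\be-\al)!}\X^{\be-\al}\right)\in\R[\X],
\]
which is well defined because the $q_\al$ on the right-hand side have already been produced at earlier stages of the induction. I would then check that $\sum_{\al\in\N^n}q_\al D^\al$ is a bona fide operator on $\R[\X]$: for any monomial $\X^\be$ only the indices $\al\preceq\be$ contribute a nonzero term, so the sum is finite, and linearity extends this to arbitrary polynomials.

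Next I would verify that the constructed operator coincides with $\Ph$. By construction, applied to $\X^\be$, it produces $\Ph(\X^\be)$ (the defining equation of $q_\be$ is exactly this identity), so the two operators agree on the monomial basis and hence on all of $\R[\X]$.

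For uniqueness, suppose $\sum_\al q_\al D^\al=\sum_\al q'_\al D^\al$; setting $r_\al:=q_\al-q'_\al$, I would induct on $|\be|$ to show $r_\be=0$. Evaluating the zero operator on $\X^\be$ yields $\sum_{\al\preceq\be}r_\al\frac{\be!}{(\be-\al)!}\X^{\be-\al}=0$; the inductive hypothesis kills all terms with $\al\prec\be$, leaving $\be!\cdot r_\be=0$, hence $r_\be=0$. I do not anticipate a genuine obstacle here; the only point that needs a little care is the well-definedness of an infinite-order differential operator acting on $\R[\X]$, which is immediate from the support condition $D^\al(\X^\be)=0$ for $\al\not\preceq\be$.
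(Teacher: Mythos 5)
Your proof is correct and follows essentially the same route as the paper: a triangular recursion on $|\be|$ that solves for $q_\be$ from the equation $\Ph(\X^\be)=\sum_{\al\preceq\be}\frac{\be!}{(\be-\al)!}q_\al\X^{\be-\al}$, using that $q_\be$ appears with the nonzero scalar $\be!$ and all other contributing indices satisfy $|\al|<|\be|$. The only difference is cosmetic: you spell out the uniqueness induction explicitly, where the paper dismisses it as clear.
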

\begin{proof} Let $\Ph$ be a linear operator on $\R[\X]$ and set
$p_{\be}:= \Ph(\X^{\be})$ for any $\be\in\N^n$. We have to find
polynomials $q_{\al}$ such that $p_{\be}=\sum_{\al\preceq\be}
\frac{\be!}{(\be-\al)!}q_{\al}\X^{\be-\al}$ for all $\be$. This is
done by induction on $|\al|$. We first choose $q_0=p_0$. Then for
any $\be\neq 0$, from $$p_{\be}=\sum_{\al\preceq\be}
\frac{\be!}{(\be-\al)!}q_{\al}\X^{\be-\al}=
q_{\be}\be!+\sum_{\al\preceq\be, |\al|<|\be|}
\frac{\be!}{(\be-\al)!}q_{\al}\X^{\be-\al}$$ we deduce $$q_{\be}=
\frac{1}{\be!}\left( p_{\be} - \sum_{\al\preceq\be, |\al|<|\be|}
\frac{\be!}{(\be-\al)!}q_{\al}\X^{\be-\al}\right).$$ The
differential operator with coefficient sequence
$\left(q_{\al}\right)_{\al}$, defined inductively by the above
rule, then coincides with $\Ph$, and there is clearly no other
possible sequence of coefficients for $\Ph$.
\end{proof}

\begin{example}(i) The multiplication
operator $M_f$ defined above is already in the form of a
differential operator, namely $M_f=f \cdot D^0.$

(ii) Consider the algebra endomorphism $E_{\underline{f}}$ as
defined above. For any polynomial $p\in\R[\X]$ we have
\begin{align}p(f_1,\ldots,f_n)=\sum_{\al\in\N^n}\frac{1}{\al!}\left(f_1-X_1\right)^{\al_1}\cdots\left(f_n-X_n\right)^{\al_n}\cdot
D^{\al}(p).\end{align} This Taylor-formula is easily verified for
monomials $p=\X^{\be},$ and thus holds in general. Recall that the
above sum is always finite, there is no convergence problem. So
the sequence $\left(q_{\al}\right)_{\al\in\N^n}$ defined by
$$q_{\al}:=\frac{1}{\al!}\left(f_1-X_1\right)^{\al_1}\cdots\left(f_n-X_n\right)^{\al_n}$$
is the coefficient sequence for $E_{\underline{f}}$ in its
representation as a differential operator.
\end{example}

\begin{definition}A (multi-)sequence
$\left(r_{\al}\right)_{\al\in\N^n}$ of real numbers is called a
\textit{moment sequence}, if there is a nonnegative Borel measure
$\mu$ on $\R^n$ such that $$r_{\al}=\int_{\R^n} \X^{\al}d\mu$$
holds for all $\al\in\N^n$.
\end{definition}

The following Theorem sums up some of the most important results
from \cite{b,gs1,gs2}. It can be seen as a complete
characterization of $\R^n$-nonnegativity preserving operators:

\begin{theorem}\label{globmain}(i) A differential operator $\Ph=\sum_{\al\in\N^n}
r_{\al}D^{\al}$ with constant coefficients is $\R^n$-nonnegativity
preserving if and only if the sequence
$\left(\al!r_{\al}\right)_{\al\in\N^n}$ is a moment sequence.

(ii) A differential operator $\Ph=\sum_{\al\in\N^n}q_{\al}D^{\al}$
with polynomial coefficients is $\R^n$-nonnegativity preserving if
and only if for all $a\in\R^n$ the operator
$$\Ph_a:=\sum_{\al\in\N^n}q_{\al}(a)D^{\al}$$ is
$\R^n$-nonnegativity preserving (and (i) applies to each $\Ph_a$).

(iii) A differential operator of finite order (with constant or
polynomial coefficients) can only be $\R^n$-nonnegativity
preserving if it is of order $0$, i.e. if it is of the form $M_f$
for some $f\in\R[\X]$.
\end{theorem}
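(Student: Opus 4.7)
The plan is to use Haviland's Theorem as the main engine, reducing each part to a statement about moments of a nonnegative Borel measure on $\R^n$.

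For part (i), I would consider the linear functional $L\colon p\mapsto \Ph(p)(0)$. Since $D^\al(\X^\be)(0)=\be!\,\delta_{\al\be}$, we have $L(\X^\al)=\al!\,r_\al$. If $\Ph$ preserves nonnegativity, then $L$ maps $\No(\R^n)$ into $[0,\infty)$, so by Haviland's Theorem there is a nonnegative Borel measure $\mu$ on $\R^n$ with $L(p)=\int p\,d\mu$, giving $\al!\,r_\al=\int\X^\al d\mu$ and exhibiting $(\al!\,r_\al)$ as a moment sequence. Conversely, having such a $\mu$, the key point is that a \emph{constant}-coefficient differential operator commutes with the translation operators $T_x\colon p(X)\mapsto p(X+x)$. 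Thus $\Ph(p)(x)=T_x(\Ph(p))(0)=\Ph(T_xp)(0)=\int T_xp\,d\mu$, which is $\geq 0$ whenever $p\geq 0$ on $\R^n$, since then $T_xp\geq 0$ on $\R^n$ as well.

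For part (ii), observe the identity $\Ph(p)(a)=\sum_\al q_\al(a)(D^\al p)(a)=\Ph_a(p)(a)$. The ``if'' direction is immediate: if every $\Ph_a$ is $\R^n$-nonnegativity preserving, then $\Ph(p)(a)=\Ph_a(p)(a)\geq 0$ for any $p\in\No(\R^n)$ and any $a\in\R^n$. For the ``only if'' direction, fix $a\in\R^n$ and study the constant-coefficient operator $\Ph_a$. For any $p\in\No(\R^n)$, setting $q=T_ap$ (which is also in $\No(\R^n)$) gives
\[
\Ph_a(q)(0)=\sum_\al q_\al(a)(D^\al p)(a)=\Ph(p)(a)\geq 0.
\]
So the functional $q\mapsto \Ph_a(q)(0)$ is nonnegative on $\No(\R^n)$, and the translation-invariance argument from part (i) then upgrades this to $\Ph_a(q)\geq 0$ on all of $\R^n$.

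Part (iii) then follows by combining (i) and (ii). By (ii) it suffices to consider constant coefficients. By (i), $(\al!\,r_\al)$ is a moment sequence with representing measure $\mu\geq 0$, and by the finite-order hypothesis only finitely many moments are nonzero. In particular, for each index $i$ and all sufficiently large $m$, $\int X_i^{2m}\,d\mu=0$, which since $\mu\geq 0$ forces $\supp(\mu)\subseteq\{X_i=0\}$. Intersecting over $i$ gives $\supp(\mu)\subseteq\{0\}$, so $\mu=r_0\,\delta_0$ and $r_\al=0$ for all $\al\neq 0$, i.e.\ $\Ph=M_{r_0}$. For polynomial coefficients, applying this to each $\Ph_a$ yields $q_\al(a)=0$ for all $a\in\R^n$ and $\al\neq 0$, hence $q_\al=0$ identically and $\Ph=M_{q_0}$.

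The only real subtlety is the translation-commutation bookkeeping in (i) and (ii); the rest is a direct application of Haviland and the elementary fact that a nonnegative measure with a vanishing even moment is supported on the corresponding coordinate hyperplane.
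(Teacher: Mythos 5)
Your proof is correct. Note, however, that the paper does not actually prove Theorem \ref{globmain}; it cites Borcea's Theorem 3.1 for (i) and (ii) and says (iii) follows, so there is no in-paper proof to compare against. The fragments the paper \emph{does} prove are Lemma \ref{not} (the ``only if'' half of (i), via Haviland applied to $p\mapsto\Ph(p)(0)$) and the later lemma giving the ``if'' half of (ii) (via the identity $\Ph(p)(a)=\Ph_a(p)(a)$); your arguments for these two pieces coincide with the paper's. What you add beyond the paper's text is the translation-commutativity observation $\Ph\circ T_x=T_x\circ\Ph$ for constant-coefficient $\Ph$, which cleanly upgrades ``nonnegativity of the functional $p\mapsto\Ph(p)(0)$'' to ``nonnegativity of $\Ph(p)$ on all of $\R^n$'' — this is exactly the point where $S=\R^n$ is essential, matching the paper's counterexamples for proper subsets $S$ — and the support argument via vanishing even moments $\int X_i^{2m}\,d\mu=0$ forcing $\supp\mu\subseteq\{0\}$ for part (iii). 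Both are sound. One small point worth making explicit in (ii): every $q\in\No(\R^n)$ is of the form $T_a p$ with $p=T_{-a}q\in\No(\R^n)$, so the functional $q\mapsto\Ph_a(q)(0)$ is indeed nonnegative on all of $\No(\R^n)$, not merely on the image of $T_a$; you use this implicitly and it is trivially true, but stating it would tighten the write-up.
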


Part (i) is \cite{b} Theorem 3.1, a special case is \cite{gs1},
Theorem 3.4 and also \cite{gs2}, Theorems A and B. Part (ii) is
again \cite{b}, Theorem 3.1. Part (iii) follows from (i) and (ii),
and was first proven in a constructive way in \cite{gs1}, Section
4.

So the problem of characterizing $\R^n$-nonnegativity preservers
boils down to characterizing moment sequences. There is a huge
amount of literature on the moment problem, we only refer to
\cite{a,h,ha,sm2} and the references therein. An important result
is for example Hamburger's Theorem for the one dimensional case:
\begin{theorem} A sequence ${\bf r}=\left(r_i\right)_{i\in\N}$ is a moment
sequence if and only if for all $m\in\N$ the matrix
$\mathcal{H}({\bf r})_m:=\left( r_{i+j}\right)_{i,j=0}^m$ is
positive semidefinite.
\end{theorem}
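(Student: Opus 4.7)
The plan is to split the biconditional and tackle the two directions separately. For necessity, if $r_i = \int t^i \, d\mu$ for some nonnegative Borel measure $\mu$ on $\R$, then for any vector $(c_0,\ldots,c_m)\in\R^{m+1}$ one computes
$$\sum_{i,j=0}^m c_i c_j r_{i+j} = \int\left(\sum_{i=0}^m c_i t^i\right)^2 d\mu \geq 0,$$
which directly shows $\mathcal{H}(\mathbf{r})_m \succeq 0$. This direction is a routine calculation and will not present any difficulty.

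For sufficiency, I would define a linear functional $L\colon \R[t]\rightarrow\R$ on the basis of monomials by $L(t^i):=r_i$ and extend linearly. The key observation is the equivalence between the positive semidefiniteness of all Hankel matrices $\mathcal{H}(\mathbf{r})_m$ and the condition $L(p^2)\geq 0$ for every $p\in\R[t]$: indeed, writing $p=\sum_{i=0}^m c_i t^i$ gives $L(p^2) = \sum_{i,j} c_i c_j r_{i+j}$, which is exactly the quadratic form associated to $\mathcal{H}(\mathbf{r})_m$.

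The crucial step is to upgrade this squares-positivity to positivity on all of $\No(\R)$. Here I would invoke the classical univariate fact that every nonnegative polynomial on $\R$ can be written as a sum of two squares $p=q_1^2 + q_2^2$ (this follows from the factorization of a real polynomial into linear factors and irreducible quadratics with negative discriminant, the latter being expressible as sums of two squares via completing the square). Consequently,
$$L(p) = L(q_1^2) + L(q_2^2) \geq 0 \quad \text{for all } p\in\No(\R).$$

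With $L$ now known to map $\No(\R)$ into $[0,\infty)$, I would apply Haviland's theorem (alluded to in the introduction and available for any closed set, in particular $S=\R$) to conclude that there exists a nonnegative Borel measure $\mu$ on $\R$ representing $L$, i.e.\ $L(p)=\int_{\R} p\,d\mu$ for all $p\in\R[t]$. Specializing to $p=t^i$ gives $r_i = \int_{\R} t^i \, d\mu$, which is the required representation. The main potential obstacle is ensuring that Haviland's theorem is applicable in the noncompact case $S=\R$, but this is precisely the content of Haviland's classical result (and for one variable on $\R$ the sum-of-two-squares fact bridges the gap between squares-positivity and $\No(\R)$-positivity without any additional moment-growth hypotheses).
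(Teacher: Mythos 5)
Your proposal is correct and follows precisely the route the paper itself indicates in the sentence after Haviland's Theorem: Hamburger's Theorem is deduced from Haviland's Theorem together with the fact that every nonnegative univariate polynomial is a sum of squares. You merely flesh out the routine details (the quadratic-form identity linking the Hankel matrices to $L(p^2)\geq 0$, the two-squares decomposition, and the passage through the linear functional) that the paper leaves implicit.
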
 Note however that in the case
of dimension $n \geq 2$, there is not such an easy
characterization of moment sequences.

We will also need  Haviland's Theorem in the following (\cite{ha},
see also \cite{m} for a proof):
\begin{theorem}[Haviland]\label{havi} Let $S\subseteq\R^n$ be a
closed set and let $L\colon\R[\X]\rightarrow\R$ be a linear
functional. Then there is a nonnegative Borel measure $\mu$ on $S$
such that $$L(p)=\int_S p d\mu \mbox{ for all } p\in\R[\X]$$ if
and only if $L(p)\geq 0$ whenever $p\geq 0$ on $S$.
\end{theorem}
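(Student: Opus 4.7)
The only-if direction is immediate from monotonicity of the integral, so the substance is the converse, and my plan is the classical two-step argument: extend $L$ using the M.~Riesz extension theorem, then represent the extension via Riesz--Markov.

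For the extension I would work in the ordered space
$$V=\left\{f\in C(S)\ \big|\ |f|\leq q\text{ on }S\text{ for some }q\in\R[\X]\right\},$$
with positivity cone $K$ of pointwise non-negative elements. Then $V$ contains both $\R[\X]|_S$ and $C_c(S)$, and after replacing the bounding $q$ by $q^{2}+1\in\No(\R^n)$ the decomposition $f=-q+(f+q)$ yields $V=\R[\X]|_S+K$. The functional $L$ descends to $\R[\X]|_S$ (any $p\in\R[\X]$ with $p|_S=0$ satisfies $\pm p\geq 0$ on $S$, forcing $L(p)=0$) and is non-negative on $\R[\X]|_S\cap K$ by hypothesis, so the M.~Riesz extension theorem produces a linear $\tilde L:V\to\R$ extending $L$ with $\tilde L\geq 0$ on $K$. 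Restricting $\tilde L$ to $C_c(S)$ and invoking Riesz--Markov--Kakutani on the locally compact Hausdorff space $S$ delivers a regular non-negative Borel measure $\mu$ on $S$ with $\tilde L(g)=\int_S g\,d\mu$ for every $g\in C_c(S)$.

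Identifying $L(p)=\int_S p\,d\mu$ for all polynomials $p$ is what I expect to be the main obstacle. For $p\geq 0$ on $S$ I would cut off with Urysohn functions $\chi_n\in C_c(S)$, $0\leq\chi_n\leq 1$, equal to $1$ on $S\cap\overline{B(0,n)}$. Since $p-p\chi_n\in K$, monotone convergence and non-negativity of $\tilde L$ give
$$\int_S p\,d\mu=\lim_n\tilde L(p\chi_n)\leq\tilde L(p)=L(p),$$
so in particular $p\in L^1(\mu)$. For the reverse inequality---the delicate tail-control step---I would dominate $p$ by a faster-growing polynomial: take $q(x)=(1+\|x\|^2)^N$ with $2N>\deg p$, so that $p/q\to 0$ at infinity. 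For any $\ep>0$ the set $A_\ep=\{x\in S:p(x)\geq\ep q(x)\}$ is closed and bounded, hence compact; a Urysohn $\chi\in C_c(S)$ equal to $1$ on $A_\ep$ then satisfies $p\leq p\chi+\ep q$ on $S$. Applying $\tilde L$ yields $L(p)\leq\int_S p\,d\mu+\ep L(q)$, and $\ep\to 0$ closes the loop.

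A general polynomial is handled by writing $p=q_1-q_2$ with $q_1=C(1+\|x\|^2)^N\in\No(\R^n)$ and $q_2=q_1-p\in\No(\R^n)$ for $C,N$ large enough to dominate $|p|$ on all of $\R^n$; linearity of $L$ and of the integral then extend the identity $L(q_i)=\int_S q_i\,d\mu$ to $L(p)=\int_S p\,d\mu$. The crux is the dominated-polynomial argument: without it the Riesz extension alone would represent $L$ correctly only on $C_c(S)$, and $\mu$ could a priori fail to capture the mass that $L$ implicitly assigns near infinity.
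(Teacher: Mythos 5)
The paper gives no proof of Haviland's theorem here; it states the result and cites \cite{ha} and \cite{m} for a proof. Your argument is correct and is essentially the standard one found in \cite{m}: extend $L$ by M.~Riesz from $\R[\X]|_S$ to the space of polynomially bounded continuous functions on $S$ (valid because any such function is dominated on $S$ by a polynomial of the form $q^2+1\in\No(\R^n)$, so $\R[\X]|_S$ plus the positive cone exhausts the space), apply Riesz--Markov on the locally compact Hausdorff space $S$ to get $\mu$ from the restriction of $\tilde L$ to $C_c(S)$, and then transfer the identity $L(p)=\int_S p\,d\mu$ from $C_c(S)$ back to $\R[\X]$. You correctly identify the crucial step: the dominated-polynomial bound $p\leq p\chi+\ep q$ with $\deg q>\deg p$, which controls the mass near infinity and yields the reverse inequality $L(p)\leq\int_S p\,d\mu$; without it the Riesz extension would only match $L$ on $C_c(S)$. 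The one cosmetic point is that for the monotone convergence step you should arrange the Urysohn cut-offs $\chi_n$ to be nondecreasing, e.g.\ replace $\chi_n$ by $\max(\chi_1,\dots,\chi_n)$, so the appeal to MCT is literal; as stated, pointwise convergence alone would call for Fatou, which gives the same one-sided bound and does not affect the conclusion.
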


Hamburger's Theorem follows for example from  Haviland's Theorem
and the fact that every nonnegative univariate polynomial is a sum
of squares of polynomials.

\section{Examples in the generale case}In this section we consider possible  generalizations of
Theorem \ref{globmain} to the case of arbitrary sets
$S\subseteq\R^n$. So let $S\subseteq\R^n$ be arbitrary. An
\textit{$S$-moment sequence} is a sequence
$\left(r_{\al}\right)_{\al\in\N^n}$ of real numbers, such that
there exists a nonnegative Borel measure $\mu$ on $S$ with
$$r_{\al}=\int_S \X^{\al}d\mu\quad  \mbox{ for all }\al\in\N^n.$$
The value $\int_S \X^{\al} d\mu$ is called the
\textit{($S$-)moment of $\mu$ of order $\al$}.

 The natural guess how to generalize Theorem
\ref{globmain} would be to simply replace $\R^n$ by $S$ each time.
But only some parts remain true.

\begin{lemma}\label{not}If $0 \in S$ and a differential operator
$\Ph=\sum_{\al}r_{\al}D^{\al}$ with constant coefficients is
$S$-nonnegativity preserving, then the sequence
$\left(\al!r_{\al}\right)_{\al\in\N^n}$ is an
$\overline{S}$-moment sequence.\end{lemma}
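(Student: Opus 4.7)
The plan is to build a linear functional on $\R[\X]$ out of $\Ph$ and the point $0 \in S$, and then to invoke Haviland's Theorem (Theorem \ref{havi}) on the closed set $\overline{S}$.

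Concretely, I would define $L \colon \R[\X] \to \R$ by $L(p) := \Ph(p)(0)$, which makes sense because $0 \in S$ and $L$ is linear as a composition of two linear maps. The first step is to compute the moments of $L$ and match them with the coefficients $\al! r_\al$: writing $p = \sum_\be c_\be \X^\be$, one has $D^\al(p)(0) = \al!\, c_\al$ by the definition of $D^\al$ given in Example \ref{exam}(1) (only the monomial $\X^\al$ contributes a nonzero constant term after differentiation). Therefore
\begin{equation*}
L(p) = \Ph(p)(0) = \sum_\al r_\al D^\al(p)(0) = \sum_\al \al!\, r_\al\, c_\al,
\end{equation*}
and in particular $L(\X^\al) = \al!\, r_\al$ for every $\al \in \N^n$.

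The second step is to check the positivity hypothesis needed in Haviland's Theorem. For any $p \in \No(\overline{S})$ we have $p \geq 0$ on $S$ (since $S \subseteq \overline{S}$), so by the $S$-nonnegativity preserving property of $\Ph$ the polynomial $\Ph(p)$ is nonnegative on $S$; evaluating at $0 \in S$ gives $L(p) = \Ph(p)(0) \geq 0$. Thus $L$ is nonnegative on $\No(\overline{S})$.

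The third and final step is to apply Haviland's Theorem to the closed set $\overline{S}$ and the functional $L$: it yields a nonnegative Borel measure $\mu$ on $\overline{S}$ with $L(p) = \int_{\overline{S}} p\, d\mu$ for every $p \in \R[\X]$. Specializing to $p = \X^\al$ shows $\al!\, r_\al = \int_{\overline{S}} \X^\al\, d\mu$, which is precisely the assertion that $(\al!\, r_\al)_{\al \in \N^n}$ is an $\overline{S}$-moment sequence. I do not see a genuine obstacle in the argument; the only subtlety worth noting is that one must pass to $\overline{S}$ so that Haviland's Theorem applies, and this is harmless because polynomials are continuous so $\No(S) = \No(\overline{S})$.
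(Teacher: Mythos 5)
Your proof is correct and takes essentially the same approach as the paper: both define the functional $L(p) = \Ph(p)(0)$, observe that nonnegativity of $\Ph$ on $\No(S)$ makes $L$ a positive functional on $\No(\overline{S})$, invoke Haviland's Theorem on $\overline{S}$, and extract the moments $\be! r_\be$ by evaluating on monomials. You are slightly more explicit about the identity $\No(S)=\No(\overline{S})$ (which the paper uses tacitly when it produces a measure on $\overline{S}$), but the argument is the same.
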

\begin{proof} The proof is the same as in \cite{b}, Theorem 3.1.
We consider the linear functional $p\mapsto \Ph(p)(0)$, which maps
$\No(S)$ to $[0,\infty)$. So by Haviland's Theorem, there exists a
measure $\mu$ on $\overline{S}$ such that
$$\Ph(p)(0)=\int_{\overline{S}}p d\mu$$ for all $p\in\R[X]$. In
particular $$\int \X^{\be}
d\mu=\Ph(\X^{\be})(0)=\sum_{\al\preceq\be} r_{\al}
\frac{\be!}{(\be-\al)!} \X^{\be-\al}(0)=\be!r_{\be}$$ for all
$\be\in\N^n$.
\end{proof}

Of course, one can always ensure $0\in S$ by switching to
$$\Ph'\colon p\mapsto \Ph\left(p(\X+a)\right)(\X-a)$$ for some
suitable $a\in\R^n$, if $S\neq \emptyset$. However, without the
assumption $0\in S$, Lemma \ref{not} fails:

\begin{example} Let $n=1$ and $S:=[2,\infty)\subseteq\R$. The
operator $$E_{X+1}\colon p\mapsto p(X+1)=\sum_{i=0}^{\infty}
\frac{1}{i!}D^{i}(p)$$ is even $S$-positivity preserving. But
there is no Borel measure $\mu$ on $[2,\infty)$ such that
$$\int_2^{\infty}X^id\mu = 1 \mbox{ for all } i\in \N.$$ Indeed from
$1=\int_2^{\infty}X^0d\mu=\mu\left([2,\infty)\right)$ we would
obtain $$1=\int_2^{\infty}X d\mu\geq 2\cdot
\mu\left([2,\infty)\right)=2,$$ a contradiction.
\end{example}

The converse of Lemma \ref{not} fails even if $0\in S$:

\begin{example} Let $S=[-1,0]\subseteq\R$ and let $\mu$ be the
Lebesque-measure restricted to $S$. Define
$$r_i:=\frac{1}{i!}\int_{-1}^0 X^i d\mu$$ for all $i\in \N$. Then
for the linear operator $\Ph$ defined by
$\Ph:=\sum_{i=0}^{\infty}r_iD^i$ we have
$$\Ph(X+1)(-1)=\sum_{i=0}^{\infty}r_i D^i(X+1)(-1)=r_1=\int_{-1}^0X
d\mu=-\frac12<0.$$ As $X+1\in\No(S)$, $\Ph$ is not
$S$-nonnegativity preserving.
\end{example}

Now we turn to possible generalizations of (ii) in Theorem
\ref{globmain}.

\begin{lemma}Let $S\subseteq\R^n$ and let $\Ph=\sum_{\al\in\N^n}q_{\al}D^{\al}$ be a differential operator with polynomial coefficients. Suppose $\Ph_a=\sum_{\al\in\N^n}q_{\al}(a)D^{\al}$
 is an $S$-nonnegativity preserver for all $a\in S$. Then $\Ph$ is
 an $S$-nonnegativity preserver.
\end{lemma}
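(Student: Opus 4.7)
The plan is to exploit the simple but crucial identity
\[
\Ph(p)(a) \;=\; \Ph_a(p)(a) \qquad \text{for every } p\in\R[\X] \text{ and every } a\in\R^n.
\]
This follows immediately from the definitions: by linearity,
\[
\Ph(p)(a) \;=\; \sum_{\al\in\N^n} q_\al(a)\cdot D^\al(p)(a),
\]
and the right-hand side is, by the definition of $\Ph_a$, exactly $\Ph_a(p)(a)$. Note that this is a pointwise statement at $a$ only; globally, $\Ph(p)$ and $\Ph_a(p)$ are of course different polynomials.

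Given this identity, the lemma is almost immediate. First I would fix $p\in\No(S)$, and then pick an arbitrary $a\in S$. By hypothesis, $\Ph_a$ is $S$-nonnegativity preserving, so $\Ph_a(p)\in\No(S)$. Since $a\in S$, we get $\Ph_a(p)(a)\geq 0$, and by the identity above $\Ph(p)(a)\geq 0$. As $a\in S$ was arbitrary, $\Ph(p)\geq 0$ on $S$, which shows $\Ph(\No(S))\subseteq \No(S)$.

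There is no real obstacle in this argument; the whole content is the observation that the value of $\Ph(p)$ at a point $a$ depends on the coefficients $q_\al$ only through their values $q_\al(a)$, and hence coincides with the value of the \emph{frozen-coefficient} operator $\Ph_a$ applied to $p$ and evaluated at that same point $a$. The one thing worth remarking on is that one only needs the $S$-nonnegativity preserving hypothesis on $\Ph_a$ for $a\in S$, which is exactly what is assumed; one does not need anything about $\Ph_a$ for $a\notin S$.
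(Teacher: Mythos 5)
Your proof is correct and follows exactly the same approach as the paper, which proves the lemma in one line by invoking the identity $\Ph(p)(a)=\Ph_a(p)(a)$; you have simply spelled out the details of why that identity suffices.
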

\begin{proof} As in the proof of \cite{b}, Theorem 3.1, the result
is clear from $\Ph(p)(a)=\Ph_a(p)(a)$.
\end{proof}

Again, the converse of this Lemma fails in general:

\begin{example}Let $S:=[-1,1]\subseteq\R$ and let $$\Ph=E_{\frac{X}{2}}\colon
p\mapsto
p\left(\frac{X}{2}\right)=\sum_{i=0}^{\infty}\frac{1}{i!}\left(-\frac{X}{2}\right)^i
D^i(p),$$ which is an $S$-positivity preserver. We have $1\in S$
and
$$\Ph_1=\sum_{i=0}^{\infty}\frac{1}{i!} \left(-\frac{1}{2}\right)^iD^i,$$
so $\Ph_1(p)=p\left(X-\frac{1}{2}\right)$ for all $p$, again using
the Taylor-formula. The polynomial $p=X+1$ belongs to $\No(S)$,
but $\Ph_1(p)=X+\frac{1}{2}$ does not. So $\Ph_1$ is not
$S$-nonnegativity preserving.
\end{example}

So in the case of arbitrary sets $S\subseteq\R^n$, one has to look
for different kinds of characterizations of $S$-nonnegativity
preservers.

\section{The adjoint map} We introduce the \textit{adjoint map} to a
nonnegativity preserver.  This adjoint map, defined via Haviland's
Theorem, was implicitly already used in \cite{b,gs1,gs2}.

Let $S\subseteq\R^n$ be closed and
$\Ph\colon\R[\X]\rightarrow\R[\X]$ be an $S$-nonnegativity
preserver. For every nonnegative Borel measure $\mu$ on $S$ with
all finite moments consider the map
$$L_{\mu}\colon\R[\X]\rightarrow \R; \ p\mapsto
\int_S\Ph(p)d\mu.$$ $L_{\mu}$ is a linear functional and satisfies
the assumption from Haviland's Theorem, since $\Ph$ is
$S$-nonnegativity preserving. So there is another nonnegative
Borel measure $\nu$ on $S$ with all finite moments, such that
$$\int_S \Ph(p)d\mu=\int_S p d\nu \quad \forall p\in\R[\X].$$

Let $\M_+(S)$ denote the set of all nonnegative Borel measures on
$S$ with all finite moments. $S$ can be considered as a subset of
$\M_+(S)$, by identifying each point $x\in S$ with the Dirac
measure $\de_x$ centered at $x$.

\begin{definition} A map $T\colon\M_+(S)\rightarrow\M_+(S)$ is called
an \textit{adjoint map} to $\Ph$, if $$\int_S\Ph(p)d\mu=\int_S pd
T(\mu)$$ holds for all $p\in\R[\X]$ and all $\mu\in\M_+(S)$.
\end{definition}

\begin{proposition}\label{functor}Let $S\subseteq\R^n$ be closed. Then

(i) Every $S$-nonnegativity preserver $\Ph$ has an adjoint map.

(ii) If $T$ and $U$ are adjoint maps to $\Ph$ and $\Ps$
respectively, then $T+U$ is an adjoint map to $\Ph+\Ps$ and
$U\circ T$ is an adjoint map to $\Ph\circ\Ps.$

(iii) If $S$ is Zariski dense and $T$ is adjoint to both $\Ph$ and
$\Ps$, then $\Ph=\Ps$.

(iv) If $f\geq 0$ on $S$, then an adjoint map for the
multiplication operator $M_f$ is the map $$T_f\colon\mu\mapsto
f\cdot \mu,$$ where $\left(f\cdot\mu\right)(A):=\int_A fd\mu$ for
Borel sets $A\subseteq\R^n$.

(v) Let $f_1,\ldots,f_n\in\R[\X]$ be such that $E_{\underline{f}}$
is an $S$-nonnegativity preserver. Then the map
$$T_{\underline{f}}\colon\mu\mapsto \mu\circ \underline{f}^{-1},$$ (where
$\left(\mu\circ
\underline{f}^{-1}\right)(A):=\mu\left(\underline{f}^{-1}(A)\right)$
for Borel sets $A$) is an adjoint map to $E_{\underline{f}}.$
\end{proposition}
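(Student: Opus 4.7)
My plan is to construct the adjoint map in part (i) via Haviland's Theorem, and then derive the remaining four parts as essentially direct consequences of the defining integral identity. For (i), I would fix $\mu \in \M_+(S)$ and consider the linear functional $L_\mu\colon p \mapsto \int_S \Ph(p)\, d\mu$. Since $\Ph$ is $S$-nonnegativity preserving, $L_\mu$ sends $\No(S)$ into $[0,\infty)$, so Theorem \ref{havi} produces a nonnegative Borel measure $\nu_\mu$ on $S$ representing $L_\mu$, i.e.\ $L_\mu(p) = \int_S p\, d\nu_\mu$ for every $p \in \R[\X]$. Every moment $\int_S \X^\al\, d\nu_\mu$ then equals $\int_S \Ph(\X^\al)\, d\mu$, which is finite because $\Ph(\X^\al)$ is a polynomial and $\mu$ has all finite moments; hence $\nu_\mu \in \M_+(S)$. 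I would set $T(\mu) := \nu_\mu$, choosing one such representing measure for each $\mu$.

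Part (iii) is then immediate: specializing the adjoint identity to $\mu = \de_x$ for $x \in S$ gives $\Ph(p)(x) = \int_S p\, dT(\de_x) = \Ps(p)(x)$ for every $x\in S$, and Zariski denseness of $S$ promotes pointwise equality on $S$ to equality of polynomials. For (ii), I would use linearity of the integral for the sum, and iterate the defining identity for the composition:
$$\int_S (\Ph\circ\Ps)(p)\, d\mu = \int_S \Ps(p)\, dT(\mu) = \int_S p\, dU(T(\mu)).$$

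For (iv) and (v) I would verify the stated formulas directly and then check that the candidate images lie in $\M_+(S)$. For (iv), $\int_S M_f(p)\, d\mu = \int_S pf\, d\mu = \int_S p\, d(f\cdot\mu)$; the measure $f\cdot\mu$ is nonnegative because $f\geq 0$ on $S$, and its $\al$-th moment $\int_S \X^\al f\, d\mu$ is finite since $\X^\al f \in \R[\X]$ and $\mu \in \M_+(S)$. For (v), the change-of-variables formula gives
$$\int_S E_{\underline{f}}(p)\, d\mu = \int_S p(f_1,\ldots,f_n)\, d\mu = \int_{\R^n} p\, d(\mu\circ\underline{f}^{-1}),$$
and Example \ref{exam}(3) together with closedness of $S$ yields $\underline{f}(S) \subseteq \overline{S} = S$, so the pushforward is supported on $S$, with finite moments $\int_S \underline{f}^\al\, d\mu$. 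No serious obstacle arises; the only mild subtlety is that $T$ in (i) is not canonically determined, since Haviland's Theorem typically does not give uniqueness of the representing measure. Any consistent choice yields a valid adjoint, and parts (ii)--(v) hold for such a choice.
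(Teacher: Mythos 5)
Your proof is correct and follows the same route as the paper: part (i) via Haviland's Theorem applied to $L_\mu\colon p\mapsto\int_S\Ph(p)\,d\mu$, part (iii) by specializing to Dirac measures and invoking Zariski denseness, and parts (ii), (iv), (v) by the standard integration identities (linearity, iterated substitution, and change of variables for the pushforward). You have simply spelled out the details that the paper dismisses as ``clear from Haviland's Theorem'' and ``standard results from integration theory,'' including the correct observation that $\underline{f}(S)\subseteq\overline{S}=S$ is needed for the pushforward in (v) to land in $\M_+(S)$.
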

\begin{proof}(i) is clear from Haviland's Theorem, as explained
above, (ii), (iv) and (v) are standard results from intergration
theory.

Towards (iii) let $\de_x$ denote the Dirac measure centered at
$x$, for every point $x\in S$. We have $$\Ph(p)(x)=\int_S\Ph(p)
d\de_x =\int_S pd T\left(\de_x\right)=\int_S
\Ps(p)d\de_x=\Ps(p)(x)$$ for every $p\in\R[\X]$ and $x\in S$. So
$\Ph=\Ps$, by the Zariski denseness of $S$.
\end{proof}

Note that the adjoint map is not unique in general. The question
whether the measure obtained in Haviland's Theorem is unique is
also known as the "determinate moment problem" (see for example
\cite{pusc,pusm,puva}).

\begin{remark} If $S$ is compact, then the measures in Haviland's
Theorem are unique, as for example pointed out in \cite{m},
Section 3.3. So the adjoint map for an $S$-nonnegativity preserver
$\Ph$ is also unique in that case, and we denote it by $\Ph^{*}$.
In view of Proposition \ref{functor} we have
$\left(\Ph+\Ps\right)^*=\Ph^*+\Ps^*$,
$\left(\Ph\circ\Ps\right)^*=\Ps^*\circ\Ph^*$ and if $S$ is Zariski
dense, one has
$$\Ph^*=\Ps^*\Longleftrightarrow \Ph=\Ps.$$
For $x\in S$ we also write $\mu_x$ instead of
$\Ph^{*}\left(\de_x\right)$, if no confusion can arise (i.e. if
only one operator is considered). With this notation we have
\begin{align}\label{eq}\Ph(p)(x)=\int_S p d\mu_x\end{align} for all
$p\in\R[\X],x\in S$.
\end{remark}

\section{Integral representations of $S$-nonnegativity preservers}

In this section we construct a general integral representation of
$S$-nonnegativity preservers. It can be seen as a generalization
of Haviland's Theorem, and does not assume compactness of $S$ or
any continuity of operators.

 Let $S\subseteq\R^n$ be closed and
$\Ph$ an $S$-nonnegativity preserver. Suppose $T$ is an adjoint
map to $\Ph$. Then for each Borel set $A\subseteq S$ we can
consider the map
\begin{align*}T_A\colon & S\rightarrow \R\\ &x\mapsto
T(\de_x)(A).\end{align*}Write $\mathcal{B}(S)$ for the
$\si$-algebra of Borel subsets of $S$. For any $A\in\B(S)$ and any
$x\in S$ we have $$0\leq T_A(x)\leq T_S(x)=\int_S 1d
T(\de_x)=\Ph(1)(x).$$ So all $T_A$ are nonnegative functions on
$S$ that are bounded by $\Ph(1)$, pointwise on $S$.

\begin{example}\label{example}(i) If $\Ph$ is the identity operator, then the identity map $T$
is an adjoint map to $\Ph$. We have $T_A=\mathds{1}_A$, the
characteristic function of $A.$  More general, consider a
multiplication operator $M_f$ with adjoint map $T_f$ as defined in
Proposition \ref{functor} (iv). Then ${T_f}_A=f\cdot\mathds{1}_A.$
 We see that the maps $T_A$ can not be expected to be continuous in
general.

(ii) For an $S$-nonnegativity preserving algebra homomorphism
$E_{\underline{f}}$ with adjoint map $T_{\underline{f}}$ as above
we have
$${T_{\underline{f}}}_A=\mathds{1}_{\underline{f}^{-1}(A)}.$$
 (iii) For finitely many polynomials $f_1,\ldots,f_r\geq 0$ on
$S$ and points $x_1,\ldots,x_r\in S$ consider the
$S$-nonnegativity preserver
$$\Ph_{\overline{f},\overline{x}}\colon p\mapsto f_1\cdot p(x_1)
+\cdots + f_r\cdot p(x_r).$$ An adjoint map is
$$T\colon \mu\mapsto \sum_{i=1}^r \left(\int f_id\mu\right) \cdot
\de_{x_i},$$so we have $$T_A=\sum_{\{i\mid x_i\in A\}}f_i,$$ a
polynomial map that is in particular continuous.
\end{example}

We always  have $T_{\emptyset}=0$ and for a countable family
$\left(A_i\right)_{i\in\N}$ of pairwise disjoint elements from
$\B(S)$ $$T_{\bigcup_i A_i}=\sum_i T_{A_i}.$$ Write $\F(S)$ for
the vector space of real valued and polynomially bounded functions
on $S$.
So the mapping \begin{align*}\m_T\colon \B(S)&\rightarrow \F(S)\\
A&\mapsto T_A\end{align*} is a measure  taking nonnegative
functions in $\F(S)$ as its values.

For pairwise disjoint sets $A_1,\ldots,A_t\in\B(S)$ and a real
valued step function $s=\sum_{i=1}^t r_i\cdot\mathds{1}_{A_i}$ we
define
$$\int_S sd\m_T:=\sum_{i=1}^t r_i\cdot\m_T(A_i)=\sum_{i=1}^r r_i\cdot T_{A_i},$$ which is a well
defined polynomially bounded real valued function on $S$. For
$x\in S$ we have $$\left(\int_S sd\m_T\right)(x)=\sum_i r_i\cdot
T_{A_i}(x)=\sum_i r_i\cdot T(\de_x)(A_i)=\int sd T(\de_x).$$

The following is our first main result. It is a general integral
representation of $S$-nonnegativity preservers and can be seen as
an generalization of Haviland's Theorem to operators:

\begin{theorem}\label{weak} Let $S\subseteq\R^n$ be closed and $\Ph\colon \R[\X]\rightarrow \R[\X]$ a
linear map. Then the following are equivalent:
\begin{itemize}\item[(i)] $\Ph$ is $S$-nonnegativity preserving
\item[(ii)] There is a measure $\m\colon\B(S)\rightarrow\F(S),$
taking only nonnegative functions as its values, with
$$\Ph(p)=\int_S pd\m \mbox{ for all } p\in\R[\X],$$ in the following sense:  For every sequence of
real valued step functions $(s_j)_j$ such that $|s_j|\leq q$ on
$S$ for a polynomial $q$ and all $j$, if $(s_j)_j$ converges
pointwise on $S$ to $p$, then the sequence $\left(\int_S s_j
d\m\right)_j$ converges pointwise on $S$ to $\Ph(p)$.
\end{itemize}
\end{theorem}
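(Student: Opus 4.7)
The plan is to extract the measure $\m$ from an adjoint map to $\Ph$ via Haviland's Theorem (Proposition \ref{functor}(i)), and to deduce both directions from pointwise applications of Lebesgue's dominated convergence theorem.

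For (i) $\Rightarrow$ (ii), I would let $T\colon\M_+(S)\to\M_+(S)$ be any adjoint map to $\Ph$ and set $\m:=\m_T$ as constructed in the discussion preceding the theorem. That discussion already verifies that $\m_T$ takes values in the nonnegative cone of $\F(S)$ (each $T_A$ is pointwise bounded by $\Ph(1)$) and is countably additive in the pointwise sense. For a step function $s=\sum_i r_i\mathds{1}_{A_i}$ and $x\in S$, one has $(\int_S s\,d\m)(x)=\int_S s\,dT(\de_x)$ directly from the definitions. Now fix $x\in S$: since $T(\de_x)\in\M_+(S)$ has all finite moments and $2|q|\leq 1+q^2$, the polynomial $q$ is integrable with respect to $T(\de_x)$. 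Hence if $(s_j)_j$ are step functions with $|s_j|\leq q$ on $S$ converging pointwise to $p$, dominated convergence yields
$$\left(\int_S s_j\,d\m\right)(x)=\int_S s_j\,dT(\de_x)\longrightarrow \int_S p\,dT(\de_x)=\Ph(p)(x),$$
the final equality being the defining property of an adjoint map applied to $\mu=\de_x$.

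For (ii) $\Rightarrow$ (i), let $p\in\No(S)$. I would approximate $p$ pointwise from below by nonnegative step functions via the standard truncated dyadic construction: for each $j\in\N$, set
$$s_j:=\sum_{k=0}^{j\cdot 2^j-1}\frac{k}{2^j}\mathds{1}_{\{x\in S:\, k/2^j\leq p(x)<(k+1)/2^j\}}+j\cdot\mathds{1}_{\{x\in S:\, p(x)\geq j\}}.$$
Each $s_j$ is a step function (finite range, Borel preimages since polynomials are continuous), satisfies $0\leq s_j\leq p$ on $S$, and $s_j(x)\to p(x)$ for every $x\in S$. Taking $q:=p$ as the common polynomial bound, hypothesis (ii) gives $\Ph(p)(x)=\lim_j(\int_S s_j\,d\m)(x)$ for every $x\in S$. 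Since $\m$ takes only nonnegative values and each $r_i\geq 0$ in the representation of $s_j$, every $\int_S s_j\,d\m$ is a nonnegative function on $S$, and the pointwise limit $\Ph(p)$ is therefore nonnegative on $S$.

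The only nontrivial step is the application of dominated convergence in the forward direction; it relies on $T(\de_x)$ having finite moments of all orders, which is built into the definition of $\M_+(S)$ and is in turn guaranteed by the version of Haviland's Theorem used to produce $T$. Everything else amounts to unwinding the definitions of $\m_T$ and of step-function integration against a vector-valued measure.
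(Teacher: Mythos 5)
Your proof is correct and follows essentially the same route as the paper: take an adjoint map $T$, set $\m:=\m_T$, and apply dominated convergence pointwise at each Dirac mass; for the converse, approximate a nonnegative polynomial by nonnegative step functions and use nonnegativity of the values of $\m$. The paper simply asserts the existence of the nonnegative step-function approximation that you spell out via the dyadic construction, and it justifies the integrability of the dominating polynomial $q$ directly via $\int_S q\,dT(\de_x)=\Ph(q)(x)<\infty$ rather than via the estimate $2|q|\leq 1+q^2$, but these are cosmetic differences.
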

\begin{proof}(i)$\Rightarrow$(ii): Let $T$ be an adjoint map to
$\Ph$ and let $\m:=\m_T$ be as described above. Let $p\in\R[\X]$
and let $(s_j)_j$ be a sequence of step functions converging
pointwise on $S$ to $p$, with $|s_j|\leq q$ on $S$ for some
polynomial $q$. For any $x\in S$ we have
\begin{align*}\left|\left(\int s_j d\m\right)(x)-\Ph(p)(x)\right|&=
\left| \int_S s_j d T(\de_x) -\int_S pd T(\de_x)\right|\\&\leq
\int |s_j-p|d T(\de_x),\end{align*} which converges to zero for
$j\to\infty$, by the Theorem of Majorized Convergence, since
$\int_S qdT(\de_x)=\Ph(q)(x)<\infty$. (ii)$\Rightarrow$(i) is
clear from the fact that for every nonnegative polynomial $p$
there exists such an approximating sequence of step functions that
are nonnegative themselves.
\end{proof}

\section{Compact sets} During this whole section let
$S\subseteq\R^n$ be \textit{compact} and Zariski dense, and let
$\Ph$ be an $S$-nonnegativity preserver. Let $\Ph^*$ denote the
(unique) adjoint map to $\Ph$, as defined above. Again write
$\mu_x$ instead of $\Ph^*(\de_x)$, for $x\in S$. We show that the
integral representation from the last section holds in a stronger
sense.

Clearly,
$$||p||_{\infty}:=\max_{x\in S} |p(x)|$$ defines a norm on
$\R[\X]$. Any nonnegativity preserver is continuous with respect
to that norm:

\begin{lemma} Let $\Ph\colon\R[\X]\rightarrow\R[\X]$ be an
$S$-nonnegativity preserver. Then $\Ph$ is a continuous operator
with operator norm $||\Ph||= ||\Ph(1)||_{\infty}.$
\end{lemma}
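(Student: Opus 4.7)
The plan is to exploit the \emph{sandwich trick}: on the compact set $S$ every polynomial is bounded, and one can bracket any polynomial $p$ between $\pm\|p\|_\infty\cdot 1$.

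First I would observe that for any $p\in\R[\X]$ with $\|p\|_\infty=c$, the two polynomials $c-p$ and $c+p$ are both in $\No(S)$, because $|p(x)|\le c$ for all $x\in S$. Applying the $S$-nonnegativity preserver $\Ph$ to both yields
\[
 \Ph(c-p)=c\,\Ph(1)-\Ph(p)\in\No(S),\qquad \Ph(c+p)=c\,\Ph(1)+\Ph(p)\in\No(S),
\]
so pointwise on $S$ one has $-c\,\Ph(1)(x)\le \Ph(p)(x)\le c\,\Ph(1)(x)$. Since $\Ph(1)\ge0$ on $S$ by Proposition \ref{pre}(i) (applied to the constant $1\in\No(S)$), this rewrites as $|\Ph(p)(x)|\le \|p\|_\infty\cdot\Ph(1)(x)$ for every $x\in S$.

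Taking the maximum over $x\in S$ gives $\|\Ph(p)\|_\infty\le\|\Ph(1)\|_\infty\cdot\|p\|_\infty$, which shows both that $\Ph$ is continuous in the $\|\cdot\|_\infty$-norm (which is indeed a norm on $\R[\X]$ by Zariski denseness of $S$) and that $\|\Ph\|\le\|\Ph(1)\|_\infty$. The reverse inequality is immediate by testing on the constant polynomial $1$: $\|\Ph\|\ge\|\Ph(1)\|_\infty/\|1\|_\infty=\|\Ph(1)\|_\infty$, since $\|1\|_\infty=1$. Combining the two inequalities yields $\|\Ph\|=\|\Ph(1)\|_\infty$.

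There is really no obstacle here; the only thing to be careful about is that one should not confuse nonnegativity \emph{of $\Ph(1)$ as a polynomial on $S$} (which follows from $\Ph$ being a nonnegativity preserver) with any a priori bound on $\Ph(1)$. Compactness of $S$ is used exactly to ensure that $\|\Ph(1)\|_\infty<\infty$, so that $\Ph$ is bounded.
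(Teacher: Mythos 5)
Your proof is correct but takes a genuinely different, and in fact more elementary, route than the paper's. The paper computes $\Ph(p)(x)=\int_S p\,d\mu_x$ via the adjoint map $\Ph^*$ (itself built from Haviland's Theorem) and then estimates $|\int p\,d\mu_x|\le\|p\|_\infty\,\mu_x(S)=\|p\|_\infty\,\Ph(1)(x)$. You instead use the sandwich $-\|p\|_\infty\le p\le\|p\|_\infty$ on $S$, push it through $\Ph$ by linearity, and read off $|\Ph(p)(x)|\le\|p\|_\infty\,\Ph(1)(x)$ pointwise. Both arrive at the same key inequality; your argument avoids the measure-theoretic machinery entirely and uses only the defining order property of a nonnegativity preserver, so it is self-contained, whereas the paper's proof leans on the adjoint-map apparatus that it has already set up and reuses throughout the section. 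One small slip: the fact that $\Ph(1)\ge 0$ on $S$ is not an application of Proposition \ref{pre}(i) (which asserts that positivity preservers are nonnegativity preservers); it follows immediately from $1\in\No(S)$ and the definition of an $S$-nonnegativity preserver. Everything else, including the remark that compactness of $S$ guarantees $\|\Ph(1)\|_\infty<\infty$ and Zariski denseness guarantees $\|\cdot\|_\infty$ is a norm, is accurate.
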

\begin{proof} For $x\in S$ and $p\in\R[\X]$ we have \begin{align*}|\Ph(p)(x)|&=\left|\int_S p
d\mu_x\right|\\&\leq\int_S|p|d\mu_x \\&\leq
||p||_{\infty}\cdot\mu_x(S) \\&=||p||_{\infty}\cdot\int_S
1d\mu_x\\&= ||p||_{\infty}\cdot \Ph(1)(x) \\&\leq ||p||_{\infty}
\cdot ||\Ph(1)||_{\infty}.\end{align*} So $||\Ph(p)||_{\infty}\leq
||p||_{\infty}\cdot||\Ph(1)||_{\infty}$, thus $\Ph$ is continuous
with $||\Ph||\leq ||\Ph(1)||_{\infty}.$ Equality follows from
$||\Ph(1)||_{\infty}=||1||_{\infty}\cdot ||\Ph(1)||_{\infty}.$
\end{proof} As $\R[\X]$ embeds densely into $C(S)$, the Banach space of continuous
real valued functions on $S$, each $S$-nonnegativity preserver
$\Ph$ has a unique continuous extension $\widetilde{\Ph}$ to
$C(S)$. In the case of a compact set, the vector measures $\m$
defined above have nicer properties:

\begin{lemma}For any Borel set $A\subseteq S$, the mapping \begin{align*}\Ph^*_A\colon S&\rightarrow
[0,\infty)\\ x&\mapsto \mu_x(A)\end{align*} is Borel measurable
and we have $||\Ph^*_A||_{\infty}\leq ||\Ph(1)||_{\infty}.$
\end{lemma}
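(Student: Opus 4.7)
The upper bound is immediate from monotonicity of $\mu_x$: for every $x \in S$,
\[\Ph^*_A(x) = \mu_x(A) \leq \mu_x(S) = \int_S 1\, d\mu_x = \Ph(1)(x) \leq \|\Ph(1)\|_\infty.\]
For Borel measurability I would argue by a Dynkin-style monotone class argument, starting from continuous integrands and promoting stepwise to all Borel sets. The plan is to show (a) $x \mapsto \int_S g\, d\mu_x$ is continuous for each $g \in C(S)$; (b) $x \mapsto \mu_x(U)$ is Borel measurable for each open $U \subseteq S$; and finally (c) the family of such good Borel sets is a $\lambda$-system containing the open sets, hence equals $\B(S)$ by the $\pi$-$\lambda$ theorem.

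For (a) I would invoke Stone-Weierstrass: since $\R[\X]|_S$ is a unital, point-separating subalgebra of $C(S)$ and $S$ is compact, polynomials are uniformly dense in $C(S)$. Pick $p_n \to g$ uniformly on $S$. By the preceding lemma, $\Ph$ is continuous in the sup norm, so $\Ph(p_n) \to \widetilde{\Ph}(g)$ uniformly on $S$. Simultaneously, the uniform bound $\mu_x(S) \leq \|\Ph(1)\|_\infty$ yields
\[\left| \int_S p_n\, d\mu_x - \int_S g\, d\mu_x \right| \leq \|p_n - g\|_\infty \cdot \|\Ph(1)\|_\infty,\]
which tends to $0$ uniformly in $x$. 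Since $\Ph(p_n)(x) = \int_S p_n\, d\mu_x$ by definition of $\mu_x$, the two limits agree, so $x \mapsto \int_S g\, d\mu_x = \widetilde{\Ph}(g)(x)$ is continuous. For (b), since $S$ is metric, every open $U \subseteq S$ is the pointwise increasing limit of the continuous functions $g_k(x) := \min\bigl(1,\, k\cdot \operatorname{dist}(x, S\setminus U)\bigr)$, and monotone convergence gives $\mu_x(U) = \lim_k \int_S g_k\, d\mu_x$, a pointwise limit of Borel-measurable functions, so $\Ph^*_U$ is Borel measurable.

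For (c), let $\mathcal{L} := \{A \in \B(S) : \Ph^*_A \text{ is Borel measurable}\}$. Then $S \in \mathcal{L}$ because $\Ph^*_S = \Ph(1)$; if $A \subseteq B$ lie in $\mathcal{L}$, finite additivity of $\mu_x$ gives $\Ph^*_{B \setminus A} = \Ph^*_B - \Ph^*_A$ measurable; and for $A_k \uparrow A$ with $A_k \in \mathcal{L}$, $\sigma$-additivity yields $\Ph^*_A = \lim_k \Ph^*_{A_k}$ pointwise. Thus $\mathcal{L}$ is a $\lambda$-system containing the $\pi$-system of open sets, so Dynkin's theorem forces $\mathcal{L} = \B(S)$. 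The principal obstacle is step (a): it requires both compactness of $S$ (to invoke Stone-Weierstrass and to have a uniform bound on $\mu_x(S)$) and the continuous extension $\widetilde{\Ph}$ supplied by the preceding lemma, without which one could not identify $x \mapsto \int_S g\, d\mu_x$ as a continuous function.
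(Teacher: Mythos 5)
Your proof is correct, and it takes a genuinely different route from the paper's. The paper approximates $\mathds{1}_A$ for \emph{closed} $A$ by a uniformly bounded sequence of polynomials $(p_j)$, uses the Theorem of Majorized Convergence pointwise to get $\Ph^*_A = \lim_j \Ph(p_j)$ as a pointwise limit of polynomial functions, and then passes to general Borel sets by transfinite induction via the identities $\Ph^*_{S\setminus A} = \Ph(1) - \Ph^*_A$ and $\Ph^*_{\bigcup_i A_i} = \lim_i \Ph^*_{A_i}$ for increasing families. You instead start from \emph{open} sets, first establishing the strictly stronger fact that $x \mapsto \int_S g\,d\mu_x = \widetilde\Ph(g)(x)$ is \emph{continuous} for every $g \in C(S)$, then sandwich $\mathds{1}_U$ from below by continuous functions and invoke monotone convergence, and finally close out with Dynkin's $\pi$-$\lambda$ theorem in place of transfinite induction. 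The $\pi$-$\lambda$ step is a more standard measure-theoretic device than the paper's transfinite induction and, to many readers, more transparent; your step (a) also isolates a fact (continuity, not merely measurability, of $\widetilde\Ph(g)$) that the paper leaves implicit in the construction of $\widetilde\Ph$. Both proofs rely on exactly the same inputs --- compactness of $S$, Stone--Weierstra\ss, the bound $\mu_x(S) \le \|\Ph(1)\|_\infty$, and the representation $\Ph(p)(x) = \int_S p\,d\mu_x$ --- so neither is more elementary; the paper's version is simply shorter.
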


\begin{proof} $||\Ph^*_A||_{\infty}\leq ||\Ph(1)||_{\infty}$ is
clear for any $A$ by what we have shown above. Now let first
$A\subseteq S$ be closed. By Urysohn's Lemma and the
Stone-Weierstra\ss{} approximation, choose a sequence of
polynomials $(p_j)_j$ with $|p_j|\leq 2$ on $S$, that converge
pointwise on $S$ to $\mathds{1}_A$, the characteristic function of
$A$. We have $\Ph(p_j)(x)=\int p_j d\mu_x\stackrel{j}{\rightarrow}
\mu_x(A)=\Ph^*_A(x)$ for all $x\in S$, by the Theorem of Majorized
Convergence. So $\Ph^*_A$ is the pointwise limit of the polynomial
functions $\Ph(p_j)$, and so clearly measurable.

For any Borel set $A$ we have $$\Ph^*_{S\setminus A}=\Ph(1)-
\Ph^*_A$$ and for Borel sets $A_1\subseteq A_2 \subseteq \ldots$
$$\Ph^*_{\bigcup_i A_i}=\lim_i ´\Ph^*_{A_i}\quad  \mbox{ pointwise on } S,$$ so the general result follows by
transfinite induction.
\end{proof}

So the measure $\m_{\Ph^*}$ takes its values in a bounded subset
of  $\mathbb{B}(S)$, the Banach space of bounded measurable real
valued functions on $S$, equipped with the sup-norm. A whole
theory of integration with respect to a measure with values in an
arbitrary Banach space is for example developed in \cite{ds},
IV.10. In our case we obtain the following strong integral
representation of $\Ph$:

\begin{theorem}\label{strong} Let $S\subseteq\R^n$ be compact and Zariski dense. Let $\Ph\colon \R[\X]\rightarrow \R[\X]$ be
an $S$-nonnegativity preserver. Then the measure $\m:=\m_{\Ph^*}$
takes only nonnegative measurable functions as its values, and we
have
$$\Ph(p)=\int_S pd\m \mbox{ for all } p\in\R[\X],$$ in the sense of Theorem \ref{weak} (ii). Furthermore, if a sequence of
real valued step functions $(s_j)_j$ converges uniformly  on $S$
to $p$ (and such sequences exist for every $p$), then the sequence
$\int_S s_j d\m$ converges uniformly on $S$ to $\Ph(p)$.
\end{theorem}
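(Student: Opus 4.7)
The plan is that most of the statement is already in hand from the material just above, so the proof should mainly repackage those ingredients and add one short norm estimate together with a compactness-based approximation. The preceding lemma shows that $\Ph^*_A = \m_{\Ph^*}(A)$ is a nonnegative, bounded, Borel-measurable function on $S$, so $\m_{\Ph^*}$ indeed takes values in the positive cone of $\mathbb{B}(S)$. Since $\Ph^*$ is an adjoint map to $\Ph$ in the sense of Proposition~\ref{functor}, Theorem~\ref{weak} applied to $T=\Ph^*$ immediately gives the representation $\Ph(p)=\int_S p\,d\m$ in the pointwise (majorized) sense of Theorem~\ref{weak} (ii). It remains only to (a) upgrade pointwise convergence to uniform convergence when the approximating step functions converge uniformly, and (b) verify that such uniformly approximating step functions exist for every $p\in\R[\X]$.

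For (a), suppose the step functions $s_j$ converge uniformly on $S$ to $p$. Using $\mu_x = \Ph^*(\de_x)$ and $\mu_x(S)=\Ph(1)(x)\leq \|\Ph(1)\|_\infty$, one estimates for each $x\in S$
\begin{align*}
\left|\left(\int_S s_j\,d\m\right)(x)-\Ph(p)(x)\right|
&= \left|\int_S s_j\,d\mu_x-\int_S p\,d\mu_x\right|\\
&\leq \int_S |s_j-p|\,d\mu_x \\
&\leq \|s_j-p\|_\infty\cdot\mu_x(S)\\
&\leq \|s_j-p\|_\infty\cdot\|\Ph(1)\|_\infty,
\end{align*}
and the right-hand side is independent of $x$ and tends to $0$. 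This is essentially the same calculation as in the continuity lemma above; in fact the estimate is just the continuity of $\Ph$ with respect to $\|\cdot\|_\infty$ applied to step-function approximants rather than polynomials. Note that each $\int_S s_j\,d\m$ is a finite linear combination of the measurable functions $\Ph^*_{A_i}$ and hence lies in $\mathbb{B}(S)$, so the uniform convergence takes place in $\mathbb{B}(S)$.

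For (b), the existence of uniformly convergent step-function approximants is a standard compactness argument: any $p\in\R[\X]$ is continuous, hence uniformly continuous on the compact set $S$, so for every $\ep>0$ one can cover $S$ by finitely many pairwise disjoint Borel sets $A_1,\dots,A_t$ of diameter less than the modulus of uniform continuity of $p$ at $\ep$, pick $x_i\in A_i$, and set $s:=\sum_{i=1}^t p(x_i)\mathds{1}_{A_i}$, which satisfies $\|s-p\|_\infty\leq\ep$. I expect the main (and only) subtlety to be cosmetic: making sure the uniform limit $\int_S s_j\,d\m$ is interpreted in $\mathbb{B}(S)$ (not merely pointwise), which is why the displayed chain of inequalities is formulated in $\|\cdot\|_\infty$ from the start. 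No further appeal to the Banach-valued integration theory of \cite{ds} is actually needed for the stated conclusion.
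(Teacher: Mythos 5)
Your proof is correct and fills in, in the natural way, exactly the details that the paper's one-line proof (``Clear from the above results and the proof of Theorem \ref{weak}'') leaves to the reader: the measurability lemma for nonnegativity of the values of $\m$, Theorem \ref{weak} for the pointwise statement, and the continuity estimate $\mu_x(S)=\Ph(1)(x)\leq\|\Ph(1)\|_\infty$ to upgrade to uniform convergence. This is essentially the same approach as the paper, just written out.
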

\begin{proof} Clear from the above results and the proof of Theorem \ref{weak}.
\end{proof}

Note that the result does only assume that $\Ph$ is nonnegativity
preserving, in contrast to standard representation results as in
\cite{ds}, VI.7, where the operators have to be \textit{weakly
compact} (see there for notions as compact or weakly compact
operator). We can further investigate nonnegativity preservers and
their vector measures $\m_{\Ph^*}:$

\begin{theorem}\label{fira}Let $S\subseteq\R^n$ be compact and Zariski dense. Let $\Ph\colon \R[\X]\rightarrow \R[\X]$ be
an $S$-nonnegativity preserver. Then the following are equivalent:

\begin{itemize} \item[(i)] There is some $d\in\N$ such that all $\Ph^*_A$ are polynomials of degree $\leq d$. \item[(ii)] $\Ph$ has finite dimensional
range. \item[(iii)] $\widetilde{\Ph}$ has finite dimensional
range.
\end{itemize}
\end{theorem}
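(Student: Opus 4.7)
The plan is to establish the easy equivalence (ii)$\Leftrightarrow$(iii) using density of $\R[\X]$ in $C(S)$ and the fact that finite dimensional subspaces of $C(S)$ are closed, and then handle (i)$\Leftrightarrow$(ii) using the integral representation of Theorem \ref{strong} combined with a Bolzano--Weierstra\ss{} style argument for finite dimensional subspaces.

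For (ii)$\Leftrightarrow$(iii): if $\Ph(\R[\X])$ is contained in a finite dimensional subspace $V\subseteq \R[\X]$, then $V$, viewed inside $C(S)$, is closed, so by continuity of $\widetilde{\Ph}$ and density of $\R[\X]$ in $C(S)$ the range of $\widetilde{\Ph}$ also lies in $V$. The converse is immediate since $\Ph=\widetilde{\Ph}|_{\R[\X]}$.

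For (i)$\Rightarrow$(ii): given $p\in\R[\X]$, choose a sequence $(s_j)_j$ of real valued step functions that converges uniformly on $S$ to $p$. Theorem \ref{strong} gives $\int_S s_j\,d\m \to \Ph(p)$ uniformly on $S$. Each integral $\int_S s_j\,d\m$ is a finite $\R$-linear combination of functions $\Ph^*_{A_i^{(j)}}$, so by (i) it equals the restriction to $S$ of a polynomial of degree $\leq d$. The space $W$ of such restrictions is finite dimensional and hence closed in $C(S)$, so the uniform limit $\Ph(p)|_S$ lies in $W$. By Zariski denseness of $S$, $\Ph(p)$ itself is a polynomial of degree $\leq d$, and $\Ph$ has finite dimensional range.

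For (ii)$\Rightarrow$(i) — the main content — let $V\subseteq\R[\X]$ be a finite dimensional subspace containing the range of $\Ph$, and set $d:=\max\{\deg f : f\in V\}$. The key auxiliary observation is the following: \emph{if $(g_j)_j \subseteq V$ is uniformly bounded in $\|\cdot\|_\infty$ and converges pointwise on $S$ to some $g$, then $g\in V$.} This is because $V$ with the restricted sup-norm is finite dimensional, hence locally compact, so $(g_j)_j$ has a subsequence converging uniformly in $V$; its limit must coincide with the pointwise limit $g$, forcing $g\in V$. Now for closed $A\subseteq S$, the proof of the previous lemma produced polynomials $p_j$ with $|p_j|\leq 2$ on $S$ and $\Ph(p_j)\to\Ph^*_A$ pointwise on $S$. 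Since $\Ph(p_j)\in V$ and $\|\Ph(p_j)\|_\infty\leq 2\|\Ph\|$, the auxiliary observation yields $\Ph^*_A\in V$, so $\Ph^*_A$ is (the restriction of) a polynomial of degree $\leq d$. One then extends to arbitrary Borel sets by transfinite induction along the Borel hierarchy, using $\Ph^*_{S\setminus A}=\Ph(1)-\Ph^*_A$ (which trivially preserves $V$) and $\Ph^*_{\bigcup_i A_i}=\lim_i \Ph^*_{A_i}$ for increasing families (where the pointwise limits are uniformly bounded by $\Ph(1)$, so the auxiliary observation applies again).

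The main obstacle is clearly the step (ii)$\Rightarrow$(i): one must upgrade mere pointwise convergence of $\Ph(p_j)$ to something strong enough to stay inside the finite dimensional subspace $V$. The uniform bound $\|\Ph(p_j)\|_\infty\leq 2\|\Ph\|$ combined with local compactness of finite dimensional normed spaces is what makes this work, and the same principle must be reused at limit steps of the transfinite induction.
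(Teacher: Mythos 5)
Your proof is correct, but for the key implication (ii)$\Rightarrow$(i) you take a genuinely different route from the paper. The paper fixes a basis $f_1,\ldots,f_r$ of the range (arranged to be nonnegative on $S$), writes $\Ph(p)=\sum_i L_i(p)f_i$ with $\|\cdot\|_{\infty}$-continuous linear functionals $L_i$, applies the Riesz representation theorem to obtain finite signed Borel measures $\nu_i$ on $S$ with $L_i(p)=\int_S p\,d\nu_i$, and then invokes determinateness of the compact moment problem to identify $\mu_x=\sum_i f_i(x)\nu_i$, yielding at once the explicit formula $\Ph^*_A=\sum_i\nu_i(A)f_i$. You instead exploit local compactness of the finite-dimensional subspace $V$ containing the range of $\Ph$: a uniformly bounded, pointwise convergent sequence in $V$ has a uniformly convergent subsequence, so its pointwise limit lies in $V$ (Zariski density of $S$ identifies functions on $S$ with polynomials). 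You then rerun the transfinite induction along the Borel hierarchy that the paper already carried out in the preceding measurability lemma, namely closed sets, complements, and countable increasing unions. Both arguments are sound. The paper's is shorter, avoids repeating the Borel-hierarchy induction, and produces an explicit decomposition of the vector measure into scalar signed measures (which feeds into open problem (4)); yours is more elementary in that it needs no Riesz representation or moment determinateness, only Bolzano--Weierstra\ss{} in $V$, at the cost of redoing the limit analysis at each stage of the hierarchy. Your handling of (ii)$\Leftrightarrow$(iii) and (i)$\Rightarrow$(ii) agrees with the paper, just spelled out in more detail.
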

\begin{proof}

(ii)$\Leftrightarrow$(iii) is clear. (i)$\Rightarrow$(ii) is clear
from Theorem \ref{strong}. For (ii)$\Rightarrow$(i) let
$q_1,\ldots,q_r\in\R[\X]$ be such that
$f_1:=\Ph(q_1),\ldots,f_r:=\Ph(q_r)$ form a basis of
$\Ph(\R[\X])$. Using the fact that each polynomial is a difference
of two squares of polynomials, we can assume that all $q_i$ and
therefore all $f_i$ are nonnegative on $S$.

For each $p\in\R[\X]$ exist uniquely determined real numbers
$L_1(p),\ldots,L_r(p)$ such that $$\Ph(p)=\sum_{i=1}^r
L_i(p)f_i.$$ The mappings $L_i\colon\R[\X]\rightarrow \R$ are
linear functionals. In other words, we have
$\Ph=\Ph_{\underline{f},\underline{L}}$ (see Example \ref{exam}
(5)).

By the Riesz-Representation Theorem (\cite{ds} IV.6.3), there are
finite signed Borel measures $\nu_i$ on $S$ such that
$L_i(p)=\int_S pd\nu_i$ for all $p$ and $i$. By the
determinateness of the Moment Problem for compact sets, we have
$$\mu_x=\sum_{i=1}^r f_i(x)\cdot\nu_i\mbox{ for all } x\in S.$$ So
for each Borel set $A\subseteq S$ we have $$\Ph^*_A=\sum_{i=1}^r
\nu_i(A)\cdot f_i,$$ which proves the claim.
\end{proof}

The following two results are standard results from the theory of
compact and weakly compact operators:

\begin{theorem}\label{co}Let $S\subseteq\R^n$ be compact and Zariski dense. Let $\Ph\colon \R[\X]\rightarrow \R[\X]$ be
an $S$-nonnegativity preserver. Then the following are equivalent:

\begin{itemize}\item[(i)] The familiy $\left(\Ph^*_A\right)_{A\in\B(S)}$
is equicontinuous. \item[(ii)] $\widetilde{\Ph}$ is a compact
operator on $C(S)$.
\end{itemize}
\end{theorem}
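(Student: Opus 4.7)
The plan is to reduce both conditions to the statement that the map $S\to\M(S)$, $x\mapsto\mu_x$, is uniformly continuous when $\M(S)$ is equipped with the total variation norm. The Arzel\`a-Ascoli theorem will handle (ii), while the Hahn decomposition will handle (i).

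First, since $\widetilde{\Ph}$ is bounded (by the preceding lemma), the Arzel\`a-Ascoli theorem yields that $\widetilde{\Ph}$ is compact if and only if $\widetilde{\Ph}(B)$ is equicontinuous, where $B$ denotes the closed unit ball of $C(S)$. Using the representation $\widetilde{\Ph}(f)(x)=\int_S f\,d\mu_x$ from Theorem \ref{strong} (valid for all $f\in C(S)$ by continuous extension from polynomials), I rewrite the equicontinuity of $\widetilde{\Ph}(B)$ at a point $x_0\in S$ as the statement that
\[
\sup_{f\in B}\Bigl|\int_S f\,d(\mu_x-\mu_{x_0})\Bigr|\longrightarrow 0 \quad\text{as } x\to x_0.
\]
By the standard Riesz-type duality for continuous functions on a compact metric space, this supremum equals $\|\mu_x-\mu_{x_0}\|_{TV}$.

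Second, from the Hahn decomposition of the signed measure $\mu_x-\mu_y$ I extract the two-sided comparison
\[
\sup_{A\in\B(S)}|\mu_x(A)-\mu_y(A)|\;\leq\;\|\mu_x-\mu_y\|_{TV}\;\leq\;2\sup_{A\in\B(S)}|\mu_x(A)-\mu_y(A)|.
\]
Thus $\|\mu_x-\mu_y\|_{TV}\to 0$ as $y\to x$ if and only if $\sup_A|\mu_x(A)-\mu_y(A)|\to 0$ as $y\to x$, and the latter is by definition the equicontinuity of the family $(\Ph^*_A)_A=(x\mapsto\mu_x(A))_A$ at $x$. Since $S$ is compact, pointwise equicontinuity is equivalent to uniform equicontinuity, so combining the two paragraphs gives (i)$\Leftrightarrow$(ii).

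I do not anticipate any genuine obstacle: the proof is a clean bookkeeping of three standard ingredients---Arzel\`a-Ascoli, the Riesz duality identity $\|\nu\|_{TV}=\sup_{f\in B_{C(S)}}|\int f\,d\nu|$, and the Hahn decomposition---once the representation $\widetilde{\Ph}(f)(x)=\int f\,d\mu_x$ has been extended from polynomials to $C(S)$. The one point to be careful about is precisely this extension: the Riesz identity uses continuous test functions, which is why one must pass from $\Ph$ to the $C(S)$-extension $\widetilde{\Ph}$ in (ii), and why the extension of the integral formula off of $\R[\X]$ via uniform approximation (already guaranteed by Theorem \ref{strong}) is the linchpin connecting (i) and (ii).
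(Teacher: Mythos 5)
Your proof is correct, and it takes a genuinely different route from the paper's. The paper disposes of the theorem in one sentence by invoking Dunford--Schwartz VI.7.7 Theorem~7 (a black-box characterization of compactness of an operator $T\colon C(S)\to X$ in terms of relative compactness of the range of its representing vector measure) and then applying Arzel\`a--Ascoli to the \emph{range of the vector measure}, i.e.\ to the family $\{\Ph^*_A\}$ itself. You instead apply Arzel\`a--Ascoli to the \emph{image of the unit ball} $\widetilde{\Ph}(B)$, and then bridge the two equicontinuity statements by hand: Riesz duality identifies equicontinuity of $\widetilde{\Ph}(B)$ at $x_0$ with continuity of $x\mapsto\mu_x$ in the total-variation norm, and the Hahn-decomposition estimate
$$\sup_A|\nu(A)|\;\leq\;\|\nu\|_{TV}\;\leq\;2\sup_A|\nu(A)|$$
(both inequalities are correct: writing $S=P\sqcup N$ one gets $\sup_A|\nu(A)|=\max(\nu^+(S),\nu^-(S))$, which is squeezed between $\tfrac12\|\nu\|_{TV}$ and $\|\nu\|_{TV}$) identifies that, in turn, with equicontinuity of $(\Ph^*_A)_A$. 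What the paper's route buys is brevity by outsourcing the vector-measure argument to Dunford--Schwartz; what yours buys is self-containment---you replace the cited machinery with two elementary ingredients (Riesz representation and Hahn decomposition) and make explicit exactly why the two forms of equicontinuity coincide. The only point worth flagging is that you correctly identify the one non-trivial prerequisite, namely extending the formula $\widetilde{\Ph}(f)(x)=\int f\,d\mu_x$ from polynomials to all of $C(S)$, which follows from density of $\R[\X]$ in $C(S)$ together with the bound $|\int p\,d\mu_x|\leq\|p\|_\infty\,\Ph(1)(x)$.
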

\begin{proof} Use \cite{ds}, VI.7.7 Theorem 7 and the fact that a
subset of $C(S)$ is contained in a compact set if and only if it
is equicontinuous and pointwise bounded (this is the Arzela-Ascoli
Theorem).
\end{proof}

\begin{theorem}\label{weco}Let $S\subseteq\R^n$ be compact and Zariski dense. Let $\Ph\colon \R[\X]\rightarrow \R[\X]$ be
an $S$-nonnegativity preserver. Then the following are equivalent:
\begin{itemize} \item[(i)] All the functions $\Ph^*_A$ are
continuous. \item[(ii)] $\widetilde{\Ph}$ is a weakly compact
operator on $C(S)$.
\end{itemize}
\end{theorem}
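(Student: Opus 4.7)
The plan is to recognize the vector measure $\m_{\Ph^*}$ as exactly the Dunford--Schwartz representing measure of the bounded operator $\widetilde{\Ph}\colon C(S)\to C(S)$, and then invoke the classical characterization of weak compactness for operators on $C(S)$ in terms of that measure. Under the canonical embedding of $\mathbb{B}(S)$ into $C(S)^{**}=M(S)^{*}$, where $g\in\mathbb{B}(S)$ acts on a regular Borel measure $\nu$ by $\nu\mapsto\int_S g\,d\nu$, the representing measure in the sense of \cite{ds}, VI.7 is the set function $A\mapsto\widetilde{\Ph}^{**}(\mathds{1}_A)$. I would verify directly that, because $\widetilde{\Ph}^{*}(\de_x)=\mu_x$ for every $x\in S$, this set function equals $A\mapsto\Ph^{*}_A=\m_{\Ph^*}(A)$.

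With this identification in hand, the equivalence follows from the standard Dunford--Schwartz theorem: an operator $T\colon C(S)\to Y$ is weakly compact if and only if its representing measure takes values in $Y$ itself (rather than merely in $Y^{**}$) and is countably additive in the norm of $Y$. Specializing to $Y=C(S)$, the implication (ii)$\Rightarrow$(i) is immediate, since weak compactness forces each $\m_{\Ph^*}(A)=\Ph^{*}_A$ to lie in $C(S)$, i.e.\ to be continuous.

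For the converse (i)$\Rightarrow$(ii) the only nontrivial task is to promote the pointwise identity $\Ph^{*}_{\bigcup_i A_i}=\sum_i \Ph^{*}_{A_i}$ from Section~5 to norm countable additivity of $\m_{\Ph^*}$ in $C(S)$. This step is handled by Dini's theorem: the partial sums $\sum_{i\leq N}\Ph^{*}_{A_i}$ form an increasing sequence of nonnegative continuous functions on the compact set $S$, converging pointwise to the continuous function $\Ph^{*}_{\bigcup_i A_i}$; hence the convergence is uniform. The Dunford--Schwartz criterion then yields weak compactness of $\widetilde{\Ph}$. I expect no serious obstacle here beyond bookkeeping; the identification of $\m_{\Ph^*}$ with the DS representing measure, once performed via $\widetilde{\Ph}^{*}(\de_x)=\mu_x$, makes the rest of the argument essentially mechanical, exactly as in the proof of Theorem~\ref{co}.
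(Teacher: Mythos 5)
Your proposal is correct and follows essentially the same route as the paper, which disposes of the theorem in one line by citing Dunford--Schwartz VI.7.3 Theorem 3; what you add is the (worthwhile) verification that $\m_{\Ph^*}$ really is the Dunford--Schwartz representing measure of $\widetilde{\Ph}$. One point deserves more care than you give it: knowing $\widetilde{\Ph}^{*}(\de_x)=\mu_x$ shows that the functional $\widetilde{\Ph}^{**}(\mathds{1}_A)\in M(S)^{*}$ agrees with $\Ph^{*}_A$ when tested against Dirac measures, but elements of $M(S)^{*}$ are not determined by their values on Diracs, so this alone does not yet identify the two. To finish the identification you need $\langle \widetilde{\Ph}^{**}(\mathds{1}_A),\nu\rangle=\int_S \Ph^{*}_A\,d\nu$ for every regular signed Borel $\nu$, i.e.\ $\widetilde{\Ph}^{*}(\nu)(A)=\int_S \mu_x(A)\,d\nu(x)$; this follows by checking both sides against continuous $f$ via the adjoint relation $\int f\,d\widetilde{\Ph}^{*}(\nu)=\int\widetilde{\Ph}(f)\,d\nu=\int\bigl(\int f\,d\mu_x\bigr)d\nu(x)$ and Fubini, using regularity and compactness of $S$. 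Also note that the Dini argument in your (i)$\Rightarrow$(ii) direction is not strictly needed, since DS VI.7.3 already lists ``$\m$ takes values in $Y$'' among the conditions equivalent to weak compactness; it is, however, a clean and self-contained way to see the norm countable additivity directly.
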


\begin{proof} This is \cite{ds}, VI.7.3 Theorem 3.
\end{proof}

\begin{remark}In case that there is a finite positive Borel measure $\mu$
on $S$ such that
$$||\Ph(p)||_{\infty}\leq \int |p|d\mu$$ holds for all
$p\in\R[\X],$ it follows from \cite{bb} Theorem 8 that
$\widetilde{\Ph}$ is weakly compact. So all the functions
$\Ph^*_A$ are continuous in that case.\end{remark}

\section{Approximation of nonnegativity preservers}

In this section we show that all $S$-nonnegativity preservers can
be approximated by very simple ones, at least in the case of a
compact set $S$. The setup is the following. Let $\W$ denote the
set of all operators of the form
$$\Ph_{\underline{f},\underline{x}}\colon p\mapsto \sum_{i=1}^r
p(x_i)\cdot f_i,$$ with $r\in\N$, $x_1,\ldots,x_r\in S$ and
polynomials $f_1,\ldots,f_r$ which are nonnegative on $S$; see
also Example \ref{exam} (4). $\W$ is a convex cone contained in
the cone of all $S$-nonnegativity preservers. All elements from
$\W$ have a finite dimensional range. Our goal is to prove that
each $S$-nonnegativity preserver can be approximated by a sequence
of elements from $\W$, pointwise on $\R[\X]$ with respect to $||\
||_{\infty}$. This convergence is also often called
\textit{convergence in the strong operator topology}.

Therefore let $S$ be compact, $\Ph$ an $S$-nonnegativity preserver
and $\m:=\m_{\Ph^*}$ the function valued measure constructed from
$\Ph$ as above. Integration of bounded measurable functions with
respect to $\m$ is defined in \cite{ds}, IV.10. We have already
seen that for a real valued step function $s$ on $S$ and $x\in S$
we have
$$\left(\int_S sd\m\right)(x)=\int_S s d\mu_x.$$ So for any
bounded measurable function $h$ on $S$ the same formula remains
true. We will need it later in the proof of the main approximation
theorem.

The semi-variation $||\m||$ of $\m$ is defined as
$$||\m||(A):=\sup ||\sum_{i=1}^r \al_i\m(A_i)||_{\infty},$$ where
the supremum ranges over all finite collections of scalars with
$|\al_i|\leq 1$ and all partitions of $A$ into a finite number of
disjoint Borel sets $A_i$. $$0\leq ||\m(A)||_{\infty}\leq
||\m||(A)<\infty$$ holds for all Borel sets $A$. However, $||\m||$
can not be expected to be a measure in general, i.e. it is usually
not additive. But there always exists a finite positive measure
$\la$ on $S$ such that $\la(A)\leq ||\m||(A)$  and
$\la(A)=0\Leftrightarrow ||\m||(A)=0$ for all $A$ in $\B(S)$
(\cite{ds}, IV.10.5 Lemma 5). A Borel set $A$ is called an
\textit{$\m$-null set} if $||\m||(A)=0$ holds, or if $A$ is a
$\la$-null set, equivalently. The usual Theorem of Majorized
Convergence is true for integration with respect to $\m$
(\cite{ds} IV 10.10 Theorem 10).

The following is the announced approximation result:
\begin{theorem}\label{approx} Let $S\subseteq\R^n$ be compact and Zariski dense. Let $\Ph$ be an
$S$-nonnegativity preserver. Then $\Ph$ can be approximated by a
sequence of operators from $\W$, with respect to the strong
operator topology.
\end{theorem}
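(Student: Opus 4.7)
My plan is to reduce the theorem to approximating the identity operator on $\R[\X]$ by a single sequence of elements of $\W$, and then to construct such a sequence via a multivariate Bernstein operator with evaluation points projected back to $S$. The reduction rests on the following closure property of $\W$: if $L = \Ph_{\underline g, \underline x} \in \W$ is written as $L(p) = \sum_{i=1}^{r} p(x_i)\, g_i$ with $x_i \in S$ and polynomials $g_i$ nonnegative on $S$, then
\[
(\Ph \circ L)(p) = \sum_{i=1}^{r} p(x_i) \cdot \Ph(g_i),
\]
and since $\Ph$ is $S$-nonnegativity preserving, each $\Ph(g_i)$ is again a polynomial nonnegative on $S$, so $\Ph \circ L \in \W$. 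Combined with the continuity estimate $\|\Ph(q)\|_\infty \le \|\Ph(1)\|_\infty \cdot \|q\|_\infty$ established at the beginning of this section, it suffices to construct $L_N \in \W$ with $\|L_N(p) - p\|_\infty \to 0$ on $S$ for every $p \in \R[\X]$: the operators $\Ph_N := \Ph \circ L_N$ will then lie in $\W$ and satisfy $\|\Ph_N(p) - \Ph(p)\|_\infty \le \|\Ph(1)\|_\infty \cdot \|L_N(p) - p\|_\infty \to 0$.

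For the construction of $L_N$, after an affine rescaling of coordinates (which affects neither the hypotheses nor the conclusion of the theorem) we may assume $S \subseteq [0,1]^n$. Fix a nearest-point projection $\pi \colon [0,1]^n \to S$, which exists by compactness of $S$, and put
\[
L_N(p)(y) := \sum_{k \in \{0, \ldots, N\}^n} p\bigl(\pi(k/N)\bigr)\, b_{N,k}(y), \qquad b_{N,k}(y) := \prod_{j=1}^{n} \binom{N}{k_j} y_j^{k_j} (1-y_j)^{N-k_j}.
\]
Each $\pi(k/N)$ lies in $S$ and each $b_{N,k}$ is a polynomial nonnegative on $[0,1]^n$ (hence on $S$), so $L_N \in \W$. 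To prove $L_N(p) \to p$ uniformly on $S$, I would combine the classical Bernstein moment bound $\sum_k \|k/N - y\|^2\, b_{N,k}(y) \le n/(4N)$ on $[0,1]^n$ with the key observation that for $y \in S$ the definition of $\pi$ forces $\|\pi(k/N) - k/N\| = d(k/N, S) \le \|y - k/N\|$, and hence
\[
\|\pi(k/N) - y\| \le 2\|k/N - y\|.
\]
A standard Bernstein--Korovkin argument -- split the sum at $\|k/N - y\| \le \delta$, control the near part by the modulus of continuity of $p$ on $[0,1]^n$, and control the far part by Chebyshev's inequality together with the moment bound -- then gives $\|L_N(p) - p\|_\infty \to 0$ on $S$ for every polynomial $p$.

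The step I expect to be most delicate is the construction of $L_N$. For a general compact $S$ -- say a low-dimensional variety, a union of pieces of different dimensions, or a finite set -- many lattice points $k/N$ can lie far from $S$, so the naive multivariate Bernstein operator on $[0,1]^n$ is not itself in $\W$. Projecting the evaluation points back to $S$ via $\pi$ restores membership in $\W$, and the doubling inequality $\|\pi(k/N) - y\| \le 2\|k/N - y\|$, valid precisely for $y \in S$, is exactly the ingredient that transfers the classical uniform convergence on the cube to uniform convergence on $S$.
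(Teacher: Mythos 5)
Your proof is correct, and it takes a genuinely different route from the paper's. The paper's argument works directly with the vector measure $\m_{\Ph^*}$ from Theorem~\ref{strong}: it partitions $S$ into small Borel pieces $A_i$ of diameter $\leq D$, uses Urysohn and Stone--Weierstra\ss{} to approximate each characteristic function $\mathds{1}_{A_i}$ by a nonnegative polynomial $f_i$ with $\|\m(A_i)-\int f_i\,d\m\|_\infty$ small, forms the operator $p\mapsto\sum_i p(a_i)\Ph(f_i)$, and controls the error by integrating against $\mu_x$ and invoking the mean value theorem, arriving at the explicit bound $D\cdot\bigl(\|\Ph(1)\|_\infty\|J(p)\|_\infty+\|p\|_\infty\bigr)$. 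Your key structural observation --- that $\W$ is stable under post-composition with $\Ph$, because $\Ph$ carries polynomials nonnegative on $S$ to polynomials nonnegative on $S$ --- reduces the whole problem to approximating the identity operator by a single fixed sequence in $\W$, after which only the operator norm bound $\|\Ph\|=\|\Ph(1)\|_\infty$ is needed. Your projected Bernstein operators, together with the correct doubling inequality $\|\pi(k/N)-y\|\leq\|\pi(k/N)-k/N\|+\|k/N-y\|\leq 2\|k/N-y\|$ valid for $y\in S$, supply exactly such a sequence via the standard Korovkin split. The result is a more modular and more elementary proof: once the continuity lemma is in hand (and that lemma itself admits a direct proof, since $\|p\|_\infty\pm p\geq 0$ on $S$ forces $|\Ph(p)|\leq\|p\|_\infty\,\Ph(1)$ on $S$), your argument bypasses the Dunford--Schwartz vector-measure integration entirely. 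What the paper's version buys is a tighter integration with the representation theory developed in the preceding sections and a cleaner explicit error constant; what yours buys is independence from that machinery and a concrete, classical approximating family.
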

\begin{proof} First assume $A_1,\ldots,A_r$ are pairwise disjoint
Borel sets with $A_1\cup\cdots\cup A_r=S$, and let $D>0$ be an
upper bound for the diameter of all the $A_i$. Choose some $a_i\in
A_i$ for all $i$.

Approximate the characteristic function $\mathds{1}_{A_i}$ by a
polynomial $f_i\geq 0$ on $S$, such that $ \|\m(A_i) -\int f_i
d\m||_{\infty} \leq \frac{D}{r}.$ This can be done by Urysohn's
Lemma, the Stone-Weierstra\ss{} Theorem and  the Theorem of
Majorized Convergence for $\m$.  Then consider the operator
$$\Ph_{\underline{f},\underline{a}}\colon p\mapsto\sum_{i=1}^r p(a_i)\cdot \Ph(f_i),$$ which belongs to
$\W$. For any polynomial $p$ and any $x\in S$ we have
\begin{align*}|\Ph(p)(x)-
\Ph_{\underline{f},\underline{a}}(p)(x)|&=\left|\int pd\mu_x
-\sum_ip(a_i)\int f_id\mu_x \right| \\ &\leq \left|\int pd\mu_x
-\int\sum_i p(a_i)\mathds{1}_{A_i} d\mu_x\right| \\ &\quad +\left|
\int\sum_i p(a_i)\mathds{1}_{A_i} d\mu_x
-\int\sum_ip(a_i)f_id\mu_x \right|
\\ &\leq \mu_x(S)\cdot ||p-\sum_i
p(a_i)\mathds{1}_{A_i}||_{\infty}
\\&\quad +||p||_{\infty}\cdot\sum_i\left|\int\mathds{1}_{A_i}d\mu_x-\int
f_id\mu_x\right| \\&\leq ||\Ph(1)||_{\infty}\cdot||p-\sum_i
p(a_i)\mathds{1}_{A_i}||_{\infty} \\&\quad +
||p||_{\infty}\cdot\sum_i\left|\m(A_i)(x) -\left(\int
f_id\m\right)(x)\right| \\&\leq ||\Ph(1)||_{\infty}\cdot||p-\sum_i
p(a_i)\mathds{1}_{A_i}||_{\infty} +D\cdot ||p||_{\infty}.
\end{align*}
By the mean value theorem applied to $p$ we obtain $$||p-\sum_i
p(a_i)\mathds{1}_{A_i}||_{\infty}\leq D\cdot ||J(p)||_{\infty},$$
where $J(p):=1+ \sum_{j=1}^n\left(\frac{\partial}{\partial
X_j}p\right)^2.$ So we have shown $$||\Ph(p)-
\Ph_{\underline{f},\underline{a}}(p)||_{\infty}\leq
D\cdot\left(||\Ph(1)||_{\infty}\cdot ||J(p)||_{\infty} +
||p||_{\infty}\right)$$ for all $p\in\R[\X]$.

So if a sequence of partitionings of $S$ is chosen such that the
diameter bound $D$ gets arbitrary small (which can obviously be
done since $S$ is compact), then the corresponding sequence of
operators from $\W$ converges to $\Ph$, pointwise on $\R[\X]$.
\end{proof}

\section{Some open problems}

We include a collection of open problems:

\begin{itemize}
\item[(1)] For a given measure $\m$ on $\B(S)$ with values in $
\F(S)$ or $\mathbb{B}(S)$, find criteria for the operator defined
by $\m$ as in Theorem \ref{weak} and Theorem \ref{strong} to map
polynomials to polynomials.

\item[(2)] Find $S$-nonnegativity preservers that are compact but
do not have a finite dimensional range. For compact $S$ this means
to  find an $S$-nonnegativity preserver $\Ph$ such that the family
$(\Ph^*_A)_{A\in\B(S)}$ is equicontinuous, but not polynomial of
bounded degree. Could it be true that compact implies finite
dimensional range for nonnegativity preservers?

\item[(3)] The same question as in (2), but with weakly compact
operators that are not compact. For compact $S$, find an
nonnegativity preserver such that all $\Ph^*_A$ are continuous,
but these functions do not form an equicontinuous family.

\item[(4)]\label{fin} Is every $S$-nonnegativity preserver with
finite dimensional range of the form $$\Ph\colon p\mapsto
\sum_{i=1}^rf_i\cdot\int_S pd\nu_i$$ with nonnegative polynomials
$f_i$ and \textit{nonnegative} Borel measures $\nu_i$ on $S$? In
view of Haviland's Theorem, can the representation
$$\Ph=\Ph_{\underline{f},\underline{L}}$$ as given in the proof of
Theorem \ref{fira} be chosen such that all $f_i\geq 0$ on $S$ and
all $L_i$ map $S$-nonnegative polynomials to nonnegative reals?
The following example might be interesting in this regard: Let
$\la$ denote the Lebesque measure. Consider the following operator
on $\R[t]$:
\begin{align*}\Ph\colon p \mapsto
(t+2)\cdot\int_{-1}^1pd\la \quad - t^2\cdot\int_0^1pd\la.
\end{align*} $\Ph$ is $[-1,1]$-nonnegativity preserving, the
polynomials $f_1=t+2$ and $f_2=t^2$ are nonnegative on $[-1,1]$,
but the linear functional $L_2\colon p\mapsto -\int_0^1p d\la$ is
not integration with respect to a nonnegative measure on $[-1,1]$.
However,
$$\Ph\colon p\mapsto (t+2)\cdot\int_{-1}^0 pd\la +
(t+2-t^2)\cdot\int_0^1pd\la$$ is a representation of $\Ph$ as
desired.

\item[(5)] In case $S$ is compact, which nonnegativity preservers
can be approximated by elements from $\W$ with respect to the
\textit{operator norm} instead of the strong operator topology as
in Theorem \ref{approx}? As all elements from $\W$ are compact
operators on $C(S)$, it is a well known fact that only compact
operators can be approximated like that. Can every compact
$S$-nonnegativity preserver be approximated?
\end{itemize}

\end{document}